\documentclass[10pt]{amsart}
\usepackage{amsmath,amscd,latexsym,verbatim,amssymb}
\usepackage{times}  
\usepackage[all]{xy}

 \newcommand{\resp}{{\it resp.} }
\newcommand{\cf}{{\it cf.} }
\newcommand{\ie}{{\it i.e.} }
\newcommand{\eg}{{\it e.g.} }

\newcommand{\N}{\mathbb{N}}

  \newcommand{\Z}{\mathbb{Z}}

  \newcommand{\sA}{{\mathcal{A}}}
\newcommand{\sB}{{\mathcal{B}}}

\newcommand{\sK}{{\mathcal{K}}}

\newcommand{\inj}{\hookrightarrow}

\newcommand{\surj}{\rightarrow\!\!\!\!\!\rightarrow}

\font\sm=cmr10 at 10pt

 \newcounter{spec}

\swapnumbers

\newtheorem{thm}{Theorem}[subsection]

\newtheorem{prop}[thm]{Proposition}

\numberwithin{equation}{section}

  at12pt

\renewcommand{\qed}{\hfill $\square$\medskip}
  
\setcounter{tocdepth}{1}

\begin{document}

 \title[Weak functoriality of Cohen-Macaulay algebras]{Weak functoriality of Cohen-Macaulay algebras}

\author{Yves Andr\'e}

\address{Institut de Math\'ematiques de Jussieu\\  4 place Jussieu, 75005
Paris\\France.}
\email{yves.andre@imj-prg.fr}
\keywords{{Direct summand conjecture, big Cohen-Macaulay algebra, perfectoid algebra}} \subjclass{13D22, 13H05, 14G20}

  \begin{abstract} We prove the weak functoriality of (big) Cohen-Macaulay algebras, which controls the whole skein of ``homological conjectures" in commutative algebra \cite{H1}\cite{HH2}. 
    Namely, for any local homomorphism $ R\to R'$ of complete local domains, there exists a compatible homomorphism between some Cohen-Macaulay $R$-algebra and some Cohen-Macaulay $R'$-algebra. 
  
  When $R$ contains a field, this is already known \cite[3.9]{HH2}. When $R$ is of mixed characteristic, our strategy of proof is reminiscent of G. Dietz's refined treatment \cite{D} of weak functoriality of Cohen-Macaulay algebras in characteristic $p$; in fact, developing a ``tilting argument" due to K. Shimomoto, we combine the perfectoid techniques of \cite{A1}\cite{A2} with Dietz's result.    
   \end{abstract}

   \maketitle
  \let\languagename\relax

     \begin{sloppypar}

      \section{Introduction.} 
    
    \subsection{}  If non-noetherian rings have found their place in standard commutative algebra, it is not so much as pathological examples or products of an inexorable generalization process, but rather as invaluable auxiliaries when the ressources of classical Cohen-Macaulay theory fail. Indeed, in such a case, ``unwanted relations" between parameters of a noetherian local ring $R$ which do not form a regular sequence may still be trivialized in some big (\ie possibly non-noetherian) $R$-algebra. This accounts for the importance of the existence of (big) Cohen-Macaulay $R$-algebras in commutative algebra (\cf \cite{H1}\cite{HH2}\cite{Hu}\cite{A3} for some overviews; in the sequel, we drop the epithet big).  
     Their existence has been known and used for a long time when $R$ contains a field, but it was proven only recently in mixed characteristic using methods from $p$-adic Hodge theory (perfectoid spaces) \cite{A2}. 
  
 For more sophisticated applications to homological conjectures in commutative algebra, more is required: a weakly functorial behaviour of these Cohen-Macaulay algebras; namely, it is expected that for any local homomorphism $ R\stackrel{f}{\to} R'$ of complete local domains, there exists a compatible homomorphism between some Cohen-Macaulay $R$-algebra and some Cohen-Macaulay $R'$-algebra. 
   Again, this has been known for a long time when $R$ contains a field \cite{HH2}, and in characteristic $p$, it actually suffices to consider a lifting $ R^+ \stackrel{f^+}{\to}  R'^+$ of $f$ to absolute integral closures \cite{HH1}.  
   
      In this paper, we establish this weak functoriality in general:
      
      \begin{thm}\label{T1} Any local homomorphism $  R  {\to} R'$ of {complete} noetherian local domains fits into a commutative square  
     \begin{equation}\label{CC1}   \xymatrix @-1pc {    R  \ar[d]   \,  \ar@{->}[r]     & R'  \ar[d]    \\ C  \,  \ar@{->}[r]   & C'       }  \end{equation} 
  where $C$ and $C'$ are 
Cohen-Macaulay algebras for $R$ and $R'$ respectively. 
\end{thm}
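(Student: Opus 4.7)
The plan is to reduce the mixed-characteristic case to Dietz's theorem in characteristic $p$ via a tilting argument, using the perfectoid big Cohen-Macaulay algebras constructed in \cite{A1,A2} as a bridge.

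\emph{Reductions.} If $R$ contains a field, the result is already known: it is \cite[3.9]{HH2} in equicharacteristic $0$, and Dietz's theorem \cite{D} in equicharacteristic $p$. I would therefore concentrate on the mixed-characteristic case. Locality of $R\to R'$ forces $R'$ to have the same residue characteristic $p$, so $R'$ is either mixed characteristic or equicharacteristic $p$; both sub-cases will be treated uniformly below.

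\emph{A perfectoid square, and tilting to characteristic $p$.} Following the construction of \cite{A2}, I would first build a commutative square
\begin{equation*}
\xymatrix @-1pc { R \ar[d] \ar[r] & R' \ar[d] \\ S \ar[r] & S' }
\end{equation*}
with $S, S'$ perfectoid in mixed characteristic, obtained by passing to compatible absolute integral closures $R^+\to R'^+$, $p$-adically completing, and adjoining $p^{1/p^\infty}$-th roots of carefully chosen systems of parameters. The Fontaine--Scholze tilting equivalence then delivers a compatible square in characteristic $p$ with $S\to S'$ replaced by the tilt $S^\flat \to S'^\flat$. At this stage, Dietz's weak functoriality \cite{D} applies to $S^\flat \to S'^\flat$ and produces a compatible pair of big Cohen-Macaulay algebras $D$ over $S^\flat$ and $D'$ over $S'^\flat$ fitting into a commutative square.

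\emph{Untilting, and the main obstacle.} The crux is to transport the Cohen-Macaulay property back to mixed characteristic. I would first replace $D, D'$ by their Frobenius-perfections (still Cohen-Macaulay in characteristic $p$, since filtered colimits of regular sequences along Frobenius remain regular), then $p$-adically complete them to obtain perfectoid algebras, and finally untilt to mixed-characteristic perfectoid algebras $C, C'$ fitting into a commutative square with $R\to R'$ by functoriality of tilting. The delicate step is verifying that $C$ and $C'$ are Cohen-Macaulay for $R$ and $R'$: the almost-mathematical formalism of \cite{A1,A2} only delivers \emph{almost}-regularity of the parameters, and one must argue --- in the spirit of \cite{A2}'s passage from almost-CM to honest-CM algebras --- that the residual almost-zero obstruction can be killed compatibly on both sides of the square. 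I expect this transfer of almost-functorial data to honest-functorial Cohen-Macaulay data to be the principal technical difficulty, and to rely essentially on the ``tilting argument'' of Shimomoto alluded to in the introduction.
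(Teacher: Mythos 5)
There is a genuine gap at the step where you ``apply Dietz's weak functoriality to $S^\flat \to S'^\flat$.'' Dietz's theory \cite{D} is a theory of \emph{seeds over complete noetherian local domains of characteristic $p$}: both his weak functoriality theorem and his passage from seeds to big Cohen--Macaulay algebras are statements about algebras over a \emph{noetherian local} base. The tilts $S^\flat$ and $S'^\flat$ of $\widehat{R^+}$ and $\widehat{R'^+}$ are perfect, non-noetherian, and not local in the relevant sense; there is no system of parameters, and no meaningful notion of ``Cohen--Macaulay $S^\flat$-algebra,'' so Dietz's results simply do not take $S^\flat \to S'^\flat$ as input. The tilting that does make the strategy work must keep a \emph{noetherian} local ring in play: one tilts the (non-noetherian) almost Cohen--Macaulay $R'$-algebra to a characteristic-$p$ algebra that is still almost Cohen--Macaulay over a noetherian regular ring $k[[p^\flat, x_2^\flat, \ldots]]$, and \emph{then} invokes Dietz's seed machinery over that noetherian base. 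This is exactly the content of Shimomoto's construction, and it is a single-object statement (an almost-CM $S$-algebra maps to a CM $S$-algebra, with control of the map), not a direct invocation of weak functoriality for a pair.

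The second missing ingredient, which is where most of the actual work of the paper lies, is the structural reduction. Tilting the whole square at once does not let you compare the Cohen--Macaulay conditions on the two sides, because the systems of parameters of $R$ and $R'$ are unrelated in general. The paper instead (a) uses Cohen factorizations to reduce to the case where $R\to R'$ is surjective, then (b) reduces to $R' = R/\frak p$ with $\frak p$ of height one and $R$ normal, and this last reduction is only possible because of a strengthening: $C$ can be \emph{prescribed in advance}, so that the diagrams for successive height-one quotients can be concatenated. Given that reduction, the bridge between $C$ and the $R'$-side is the explicit perfectoid quotient $\bar C = C/(x_2^{1/p^\infty})^-$ where $x_2$ generates $\frak p R_{\frak p}$, which becomes a $\pi^{1/p^\infty}$-almost Cohen--Macaulay algebra over $R' = R/\frak p$; establishing non-almost-triviality here requires a separate argument (solid closure), and only then does one feed $\bar C$ into Shimomoto's tilting machine to produce $C'$ with a map $C \to \bar C \to C'$. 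Your proposal correctly identifies tilting and Shimomoto's idea as the crux, but applies them in the wrong place and skips the reductions that make the comparison of parameters on the two sides possible.
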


   \subsection{}  In the remaining case to be treated, \ie when $R$  is of mixed characteristic, we prove the following more precise result:
           \begin{thm}\label{T2}  {Any local homomorphism ${f}: R  {\to} R'\,$ of {complete} noetherian local domains, with $R$ of mixed characteristic $(0,p)\,$, fits into a commutative square} 
     \begin{equation}\label{CC2}   \xymatrix @-1pc { \\    R  \ar[d]   \,  \ar@{->}[r]^{{f}}    & R'  \ar[d]      
  \\  \;\, R^+  \ar[d]   \,  \ar@{->}[r]^{{f}^+}    & \;\, R'^+  \ar[d]   
      \\ C  \,  \ar@{->}[r]    & C'    }  \end{equation} 
 {where $R^+$ and $R'^+$ are the absolute integral closures of $R$ and $R'$ respectively,  $C$ and $C'$ are 
 Cohen-Macaulay algebras for $R$ and $R'$ respectively, $C$ is $\hat\sK^{\sm o}$-perfectoid, and so is $C'$ if $R'$ is of mixed characteristic $(0,p)\,$ (\resp $C'$ is perfect if $R'$ is of characteristic $p$). }

 Moreover,  ${f}^+$ can be given in advance.
  \end{thm}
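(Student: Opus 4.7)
The plan, in the spirit of the author's introductory remarks, is to combine the perfectoid construction of Cohen-Macaulay algebras from \cite{A2} with Dietz's characteristic $p$ weak functoriality theorem \cite{D}, using Shimomoto's tilting trick as the bridge. I first fix the prescribed lift $f^+ \colon R^+ \to R'^+$ and pass to $p$-adic completions $\widehat{R^+} \to \widehat{R'^+}$ which, after adjoining compatible systems of $p$-power roots as in the $\hat\sK^{\sm o}$-perfectoid setup of the paper, form a map of integral perfectoid algebras. Tilting produces a map $\widehat{R^+}^{\flat} \to \widehat{R'^+}^{\flat}$ of perfect $\F_p$-algebras. In the case where $R'$ has characteristic $p$, the target simplifies to $R'^+$ itself, since that ring is already a perfect $\F_p$-algebra and receives the reduction mod $p$ of $f^+$ through the Frobenius inverse limit.

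Next, using the technology of \cite{A2}, I construct a $\hat\sK^{\sm o}$-perfectoid Cohen-Macaulay $R^+$-algebra $C$, and form its tilt $C^{\flat}$, a perfect characteristic $p$ algebra receiving a compatible map from $\widehat{R^+}^{\flat}$. I then extract a noetherian complete local characteristic $p$ domain $S \subset \widehat{R^+}^{\flat}$ whose parameters correspond under untilting to a system of parameters of $R^+$ (this is the Witt-vector style bookkeeping that makes Cohen-Macaulayness transport through tilting), and likewise $S' \subset \widehat{R'^+}^{\flat}$, or $S' \subset R'^+$ in the char $p$ case. Dietz's theorem, applied to $S \to S'$ with the $S$-CM algebra structure inherited from $C^{\flat}$, produces an $S'$-CM algebra $D'$ with a compatible map $C^{\flat} \to D'$. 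Extending scalars along $S' \to \widehat{R'^+}^{\flat}$ and passing to a perfect closure yields a perfect $\widehat{R'^+}^{\flat}$-algebra $C'^{\flat}$ receiving a map from $C^{\flat}$.

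Finally I untilt. When $R'$ has characteristic $p$, nothing further is needed: $R'^+ = \widehat{R'^+}^{\flat}$ and I set $C' := C'^{\flat}$, which is the desired perfect Cohen-Macaulay $R'^+$-algebra. When $R'$ has mixed characteristic, the tilting equivalence provides a $\hat\sK^{\sm o}$-perfectoid $\widehat{R'^+}$-algebra $C'$ whose tilt is $C'^{\flat}$; the compatibility of $C \to C'$ with $\widehat{R^+} \to \widehat{R'^+}$ is then formal from the tilting equivalence applied to the pair $(C^{\flat}, C'^{\flat})$. To conclude that $C'$ is Cohen-Macaulay over $R'$, I invoke the perfectoid CM criterion of \cite{A2}: a $\hat\sK^{\sm o}$-perfectoid algebra is CM over its noetherian local base iff its tilt satisfies the corresponding CM condition over the tilted base, and this is guaranteed by Dietz's output.

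The main obstacle, I expect, is the bookkeeping needed to extract and work with the noetherian complete local characteristic $p$ subrings $S, S'$ of the (highly non-noetherian) tilts: Dietz's theorem requires noetherian local domains as sources, whereas $\widehat{R^+}^{\flat}$ and $\widehat{R'^+}^{\flat}$ are perfect rings that are typically far from noetherian. One must choose $S, S'$ delicately so that on one hand their parameters correspond under untilting to parameters of $R, R'$, rendering the untilted Cohen-Macaulayness meaningful, and on the other hand the extension of Dietz's CM algebra to a $\widehat{R'^+}^{\flat}$-algebra remains CM in the almost sense required to untilt, in mixed characteristic, to a genuine perfectoid CM algebra over $R'$. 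Securing both compatibilities simultaneously is the technical crux where Shimomoto's tilting argument combined with Dietz's refined construction becomes essential.
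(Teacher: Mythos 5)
Your high-level outline --- combine the perfectoid construction from \cite{A2} with Dietz's characteristic-$p$ theory through Shimomoto's tilting bridge --- is indeed the stated strategy of the paper. But the execution you propose differs from the paper's in an essential way, and there are real gaps.

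The central gap is step where you would apply Dietz along a map $S\to S'$ of noetherian complete local subrings of the two tilts. You flag this as the ``technical crux,'' but the difficulty is not merely bookkeeping: for a general local homomorphism $f$ there is no reason why the tilted map $\widehat{R^+}^\flat\to\widehat{R'^+}^\flat$ should carry a chosen $S=k[[p^\flat,x_2^\flat,\dots,x_d^\flat]]$ into a chosen $S'$; the images of $x_i$ in $R'$ need not remain part of a system of parameters, may generate an ideal of unexpected height, or may vanish. The paper circumvents this precisely by two reductions you omit: via Cohen factorizations one reduces to \emph{surjections}, and then --- by proving the surjective case in a stronger form where $C$ is prescribed in advance, so that the diagrams can be composed and decomposed --- one reduces further to \emph{quotients by height-one primes} with $R$ normal. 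Only then does the parameter bookkeeping become tractable (exactly one parameter, $x_2$ generating $\frak p R_{\frak p}$, is killed, and $p^\flat$ survives on both sides). Without these reductions I do not see how to produce the map $S\to S'$ you need, let alone one compatible with $f^+$.

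Second, the way the paper invokes Dietz is different from yours. You propose to apply Dietz's weak functoriality theorem directly (push $C^\flat$ along $S\to S'$ to a CM $S'$-algebra). The paper instead builds an intermediate object $\bar C := C/(x_2^{\frac{1}{p^\infty}})^-$, shows it is a perfectoid \emph{almost} Cohen--Macaulay $R'$-algebra, and uses Dietz's \emph{seed} theory (durable colon-killers $\Rightarrow$ seeds $\Rightarrow$ CM targets, Proposition~\ref{P3}) through Shimomoto's construction (Theorem~\ref{T3}) to upgrade ``almost CM'' to genuine CM, with a map from the untilt $B'^\natural$. This is the mechanism that delivers $C\to C'$; ``extend scalars and pass to the perfect closure'' does not obviously preserve the Cohen--Macaulay property, and Dietz's output would in any case only be an $S'$-algebra, not an $\widehat{R'^+}^\flat$-algebra, whereas the paper handles $f^+$ via a directed colimit over finite normal subextensions $R_\beta\subset R^+$.

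Third, you do not address the non-(almost)-triviality issue: once one kills $x_2^{\frac{1}{p^\infty}}$ (or in your setup, once one tensors with $S'$), one must prove that the resulting object is not almost zero, i.e.\ that $\pi^{\frac{1}{p^\infty}}$ is not swallowed by $\frak m C + \sqrt{\frak p C}$. This is Proposition~\ref{P0}(2), proved via Hochster's solid closure, and it was a genuine gap in an earlier version of the paper, pointed out by Gabber. Any variant of the argument must deal with it; your sketch does not.
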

  Here $\hat\sK^{\sm o}$ is the perfectoid ring obtained from the Witt ring of the algebraic closure of the residue field of $R'$ by adjoining $p^{th}$-power roots of $p$ and completing; ``${f}^+$ can be given in advance" means that for any choice of the upper commutative square, one can form a commutative lower square as indicated.  
 
     \subsection{} Our strategy of proof is inspired by Dietz's refined weak functoriality of Cohen-Macaulay algebras in characteristic $p$. Using Cohen factorizations, one reduces to the case of a surjection, in which case one proves a stronger version of weak functoriality where $C$ is given in advance (and can be constructed for instance as in \cite{A2}): this allows to compose or decompose the morphism $f$, hence to reduce to the case when $f$ is a quotient map by a prime ideal of height one (and $R$ is normal)\footnote{in this special situation, weak functoriality has already been established in \cite{HM} in mixed characteristic, using perfectoid methods after \cite{A2} and getting important corollaries such as the vanishing conjecture on Tor's. The main progress in the present paper in this special situation lies, as said above, in the fact that $C$ may be given in advanced.}.
     
     To deal with this special case in the char. $p$ situation, Dietz uses a subtle Frobenius argument. In order to transpose it somehow to mixed characteristic, we take advantage of a remarkable insight of K. Shimomoto \cite{Sh2} who combined perfectoid techniques with Dietz's results through tilting, to show the existence of a {\it perfectoid} Cohen-Macaulay $R$-algebra $C$ (in a weak sense), so that perfectoid techniques can be applied to $C$ itself.  
     
 \bigskip 
      
 {  \small {\it Acknowledgment}. I am very grateful to K. Shimomoto for sending me his paper \cite{Sh2}, which not only supplied a useful tool but also directed my attention towards Dietz's work. I also thank him, together with L. Ma, for comments on the first draft of this paper.
 
 I am very grateful to O. Gabber for pointing out an unclear non-(almost)-triviality issue in a previous version, suggesting the use of solid closure theory to settle this issue, \cf \ref{P0} (2) (he also proposed a different solution to this technical problem), and sending several further remarks.}

  \bigskip \section{Review of absolute integral closure, (almost) perfectoid algebras and (almost) Cohen-Macaulay algebras.}
    
        \subsection{Weak functoriality of absolute integral closure} (See also \cite[3]{Hu}). 
         Let $R$ be a domain with fraction field $K$,  $ K^+$ an algebraic closure of $K$, and $R^+$ the integral closure of $R$ in $K^+$ (so that $R^+ \cap K $ is the normalization of $R$).  The absolute automorphism group  $G_K = {\rm{Aut}}_K K^+ $ may be identified with the group of $R$-automorphisms of  $R^+$.
            
       Let $R'$ be another domain with fraction field $K'$ of characteristic $0$,  $ K'^{+}$ an algebraic closure of $K$, and $R'^{+}$ the integral closure of $R'$ in $K'^{+}$. 
            
         \begin{prop}\label{P1} \begin{enumerate} \item Any homomorphism $R  \stackrel{{f}}{\to} R'$ lifts to a homomorphism $R^+  \stackrel{{f}^+}{\to} R'^+$.   If ${f}$ is injective (\resp surjective), so is ${f}^+$. 
 
 \item Assume that ${f}$ is injective or $R$ is normal. Then ${f}^+$ is unique up to precomposition by an element of $G_K$.
  
\item Assume that $R$ is normal. Any factorization $R\stackrel{{f}_1}{\to} R_1\to \cdots \to R_{n-1} \stackrel{{f}_n}{\to} R' $ of ${f}$ lifts to a factorization $R^+ \stackrel{{f}^+_1}{\to} R_1^+\to \cdots \to R_{n-1}^+ \stackrel{{f}_n^+}{\to} R'^+ $ of any given ${f}^+$. \end{enumerate}
      \end{prop}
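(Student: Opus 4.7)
My plan is to handle the three items in sequence, reducing each to standard facts about integral closures and Galois action on them. For (1), I set $\mathfrak{p} := \Ker f$ and choose a prime $\mathfrak{q}$ of $R^+$ lying over $\mathfrak{p}$. The key observation to establish is that $R^+/\mathfrak{q}$ is itself absolutely integrally closed: any monic in $(R^+/\mathfrak{q})[x]$ lifts to a monic in $R^+[x]$, whose roots lie in $R^+$, and reducing modulo $\mathfrak{q}$ splits it. Consequently the fraction field $\bar K_\mathfrak{p}$ of $R^+/\mathfrak{q}$ is an algebraic closure of $K_\mathfrak{p} := \mathrm{Frac}(R/\mathfrak{p})$, and $R^+/\mathfrak{q}$ is exactly the integral closure of $R/\mathfrak{p}$ in $\bar K_\mathfrak{p}$. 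Next I extend the induced embedding $K_\mathfrak{p} \inj K'$ to $\bar K_\mathfrak{p} \inj K'^+$ (possible since $K'^+$ is algebraically closed); restricting to integral elements gives $R^+/\mathfrak{q} \inj R'^+$, and composing with $R^+ \surj R^+/\mathfrak{q}$ defines $f^+$. When $f$ is injective, $\mathfrak{p}=0$ forces $\mathfrak{q}=0$, so $f^+$ is injective; when $f$ is surjective, I identify $K_\mathfrak{p}$ with $K'$ and pick $\bar K_\mathfrak{p} \iso K'^+$ to be an isomorphism, which makes $f^+$ surjective.

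For (2), suppose $f^+$ and $g^+$ are two lifts of $f$. When $f$ is injective, $\Ker f^+ \cap R = \Ker f = 0$, and since $R^+$ is a domain integral over $R$ no nonzero prime of $R^+$ contracts to $0$, so $\Ker f^+ = \Ker g^+ = 0$; both maps then extend to field embeddings $K^+ \inj K'^+$ over $K \inj K'$, and these differ by an element of $G_K$ because their images are algebraic closures of $K$ inside $K'^+$, hence coincide. When $R$ is normal, set $\mathfrak{p} := \Ker f$; the classical fact (in the inverse-limit form) that $G_K$ acts transitively on primes of $R^+$ above $\mathfrak{p}$ lets me replace $g^+$ by its precomposition with a suitable element of $G_K$ so that $\Ker f^+ = \Ker g^+ =: \mathfrak{q}$. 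The induced injections $R^+/\mathfrak{q} \inj R'^+$ give two embeddings of $\bar K_\mathfrak{p}$ into $K'^+$ over $K_\mathfrak{p} \inj K'$, differing by some $\tau \in \mathrm{Aut}_{K_\mathfrak{p}} \bar K_\mathfrak{p}$; by the surjection of the decomposition subgroup $D_\mathfrak{q} \subset G_K$ onto $\mathrm{Aut}_{K_\mathfrak{p}} \bar K_\mathfrak{p}$, $\tau$ lifts to an element of $G_K$, completing uniqueness up to $G_K$.

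For (3), I lift each $f_i$ to some $f_i^+ : R_{i-1}^+ \to R_i^+$ using (1), and form the composite $g^+ := f_n^+ \circ \cdots \circ f_1^+$, which lifts $f$. By (2), applicable because $R$ is normal, $g^+ = f^+ \circ \sigma$ for some $\sigma \in G_K$; replacing $f_1^+$ by $f_1^+ \circ \sigma^{-1}$ still lifts $f_1$ (since $\sigma^{-1}$ acts as the identity on $R = R^+ \cap K$) and corrects the composite to equal $f^+$. The main technical hurdle is the pair of Hilbert-ramification facts invoked in (2) — the transitivity of $G_K$ on primes of $R^+$ above $\mathfrak{p}$, and the surjectivity $D_\mathfrak{q} \surj \mathrm{Aut}_{K_\mathfrak{p}} \bar K_\mathfrak{p}$ — both of which are standard for finite Galois subextensions of $K^+/K$ and extend to the absolute closure by passing to the inverse limit.
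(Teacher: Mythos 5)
Your proof is correct and follows essentially the same route as the paper: in (1) you use the same observation that $R^+/\mathfrak q$ is absolutely integrally closed (merging the paper's separate surjective and injective cases into one step), in (2) you unpack the standard Galois/ramification theory that the paper delegates to a Bourbaki citation, and (3) is the same twist-by-$\sigma$ argument. The only difference is presentational — you re-derive transitivity of $G_K$ on primes over $\frak p$ and the surjection $D_{\frak q}\to \operatorname{Aut}_{K_{\frak p}}\bar K_{\frak p}$, whereas the paper simply cites \cite[Cor. 1 to Prop. 6, \S2 n.1]{B}.
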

 
 \begin{proof} $(1)$ Let us first assume that ${f}$ is {\it injective}. We note that any extension ${f}^+$ is injective \cite[Cor. 2 to Prop. 1,  \S2 n.1]{B}.  The embedding $K\inj K'$ extends to an embedding $K^+ \inj K'^+$ (unique up to precomposition by an element of $G_K$); therefore ${f}$ extends to an embedding $R^+ \inj R'^+$ (unique up to precomposition by an element of $G_K$). 
 
 Let us next assume that ${f}$ is {\it surjective}, with kernel $\frak p$. Let $\frak p^+$ be a prime ideal above $\frak p$ in the integral extension $R^+$. Every monic polynomial $P$ with coefficients in $R^+/\frak p^+$, lifts to a monic polynomial with coefficients in $R^+$, which is a product of linear factors in the absolutely integrally closed domain $R^+$, hence $P$ is a product of linear factors in $R^+/\frak p^+$ 	as well. We conclude  that the fraction field of $R^+/\frak p^+$ is algebraically closed. Since $R^+/\frak p^+$ is integral over $R'= R/\frak p$, it is isomorphic to $R'^+$, hence ${f}$ extends to a surjection $R^+ \inj R'^+$. 
 
The general case is obtained by factoring ${f}$ into a surjection followed by an embedding.

 \smallskip   \noindent $(2)$ The case when ${f}$ is injective has already been treated, and the case when $R$ is normal (\ie $R= R^+\cap K$) follows from \cite[Cor. 1 to Prop. 6,  \S2 n.1]{B}. 
 
 \smallskip   \noindent   $(3)$ By $(2)$, for any choice of lifts ${f}_i^+$, there exists $\sigma\in G_K$ such that ${f}_n^+ \circ \cdots {f}_1^+\circ \sigma = {f}^+$. It then suffices to replace ${f}_1^+$ by ${f}_1^+\circ \sigma$. 
   \end{proof}
 
 Using \cite[V, \S 2, ex. 13]{B}, one sees that $(2)$ and $(3)$ may fail if the assumption is removed (this is related to the failure of going-down for non-normal rings).
   
      \subsection{Perfectoid algebras} (\cf \cite{Sc}, and further \cite{A1}; we use a specific perfectoid valuation ring in this paper but the general theory would work over any perfectoid valuation ring). Unless otherwise specified, we denote by $\,\widehat{\,\,\,}\,$  the $p$-adic (separated) completion of any ring. We denote by $F$ the Frobenius endomorphism of any ring of characteristic $p$.

  \subsubsection{}   Let $k$ be a perfect field of characteristic $p$, $W:= W(k)$ its Witt ring, $\sK^{\sm o} := W[p^{\frac{1}{p^\infty}}]$, and $\hat\sK^{\sm o}$ its completion, which is a {\it perfectoid valuation ring} ($F$ induces an isomorphism $\hat\sK^{\sm o}/p^{\frac{1}{p}}\stackrel{\sim}{\to} \hat\sK^{\sm o}/p$) with residue field $k$; this is the valuation ring of the perfectoid field $\hat\sK := \hat\sK^{\sm o}[\frac{1}{p}]$.

 The {\it tilt} of $ \hat\sK^{\sm o}$ is defined to be $\hat\sK^{\flat\sm o}:= \lim_{F}\,\hat\sK^{\sm o}/p$. This is a perfect complete valuation ring of characteristic $p$. In fact, denoting by $p^\flat$ the element $(\ldots, p^{\frac{1}{p}}, p)$ of $\hat\sK^{\flat\sm o}$, $\hat\sK^{\flat\sm o}$ is the $p^\flat$-adic completion of $k[ (p^{\flat})^{\frac{1}{p^\infty}}] $. 
       
  \subsubsection{}    Let $B$ be a $p$-adically complete and $p$-torsionfree $\hat\sK^{\sm o}$-algebra.  Then $B[\frac{1}{p}]$ is canonically a Banach $\sK$-algebra, with unit ball $ p^{-\frac{1}{p^\infty}}B$ . Let   $ B[\frac{1}{p}]^{\sm o}\supset B$ be the ring of power-bounded elements of $B[\frac{1}{p}]$. We say that $B$ is {\it uniform} if $B[\frac{1}{p}]^{\sm o}\subset p^{-N} B$ for some $N\in \N$; any such algebra is {reduced} (because the norm of $B[\frac{1}{p}]$ is then equivalent to the spectral seminorm).
      
    \subsubsection{}  We say that $B$ is {\it perfectoid}\footnote{or: integral perfectoid.} if $F$ induces an isomorphism 
      $B/p^{\frac{1}{p}} \stackrel{\sim}{\to} B/p$.  
      
       In that case $ p^{-\frac{1}{p^\infty}}B = B[\frac{1}{p}]^{\sm o}$, and in particular, $B$ is uniform. In addition, for any $\varpi\in \hat\sK^{\sm o}$ with $\vert p\vert^{\frac{1}{p}} \leq \vert \varpi\vert < 1$, $F$ induces an isomorphism 
      $B/\varpi \stackrel{\sim}{\to} B/\varpi^p$.

   The $p$-adic (separated) completion of any colimit of perfectoid $\hat\sK^{\sm o}$-algebras is perfectoid. 
   
   \begin{prop}\label{P2} Let $\frak a$ be an ideal of $B$ generated by a regular sequence $(p, x_2, \ldots, x_d)$. Then the $\frak a$-adic completion $\hat B^{\frak a}$ is perfectoid.     \end{prop}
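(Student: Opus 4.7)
I will verify the three properties defining perfectoid for $\hat B^{\frak a}$: $p$-adic completeness, $p$-torsionfreeness, and $F\colon\hat B^{\frak a}/p^{1/p}\xrightarrow\sim\hat B^{\frak a}/p$. The idea is to use the regular sequence hypothesis to rewrite $\hat B^{\frak a}$ as a nested completion, first in $I:=(x_2,\ldots,x_d)$ and then in $(p)$.

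Setting up the nested completion, the elementary inclusions $\frak a^{2n}\subseteq(p^n)+I^n\subseteq\frak a^n$ show that the $\frak a$-adic topology on $B$ equals the sup of the $p$-adic and $I$-adic topologies, so $\hat B^{\frak a}=\widehat{(\hat B^I)}^{\,p}$. Using the regular sequence, $B/I$ is $p$-torsionfree, and the associated graded $\bigoplus_n I^n/I^{n+1}\cong(B/I)[X_2,\ldots,X_d]$ is $p$-torsionfree summand-by-summand. A short induction on $n$ via $0\to I^n/I^{n+1}\to B/I^{n+1}\to B/I^n\to 0$ then shows $p$ is a non-zero-divisor on each $B/I^n$; by Mittag-Leffler, $\hat B^I$ is $p$-torsionfree, hence also $p^{1/p}$-torsionfree (since $p=(p^{1/p})^p$). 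Therefore $\hat B^{\frak a}$ is $p$-adically complete by construction and $p$-torsionfree.

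Next, the general fact that $\widehat R^{\,p}/p\widehat R^{\,p}=R/pR$ for any $p$-torsionfree $R$ (immediate from the short exact sequence $0\to\widehat R^{\,p}\xrightarrow p\widehat R^{\,p}\to R/pR\to 0$), applied to $R=\hat B^I$ and its analogue for $p^{1/p}$, together with passing $0\to B/I^n\xrightarrow p B/I^n\to(B/p)/\bar I^n\to 0$ through $\varprojlim$ (again via Mittag-Leffler), yields the identifications $\hat B^{\frak a}/p=\widehat{B/p}^{\,\bar I}$ and $\hat B^{\frak a}/p^{1/p}=\widehat{B/p^{1/p}}^{\,\bar I}$, where $\bar I$ denotes the image of $I$.

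Under these identifications the Frobenius on $\hat B^{\frak a}$ becomes the map of $\bar I$-adic completions induced by the iso $F\colon B/p^{1/p}\xrightarrow\sim B/p$ supplied by the perfectoid hypothesis on $B$. The ring iso $F$ carries $\bar I$ to $F(\bar I)=(x_2^p,\ldots,x_d^p)$; a pigeonhole estimate ($\bar I^N\subseteq(x_2^p,\ldots,x_d^p)^n$ as soon as $N\geq pn+(d-1)(p-1)$) shows that $\{(x_2^p,\ldots,x_d^p)^n\}$ and $\{\bar I^n\}$ are cofinal on $B/p$, so the induced map on $\bar I$-adic completions is an isomorphism, giving the last required property. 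The main obstacle is the nested completion reduction of the first step: verifying that the various quotients and inverse limits commute requires the regular-sequence hypothesis used essentially.
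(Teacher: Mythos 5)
Your proof and the paper's are close in spirit: both reduce to identifying $\hat B^{\frak a}/p$ with the $\bar{\frak a}$-adic completion of $B/p$ and then apply the Frobenius isomorphism of $B$ together with the cofinality of $\{\bar{\frak a}^n\}$ and $\{F(\bar{\frak a})^n\}$ (your pigeonhole bound is the paper's ${\bar{\frak a}}^{pd}\subset F\bar{\frak a}\subset{\bar{\frak a}}^p$). The difference is how the identification $\hat B^{\frak a}/p\cong\widehat{B/p}^{\,\bar{\frak a}}$ is obtained. The paper works entirely inside $\hat B^{\frak a}$: it shows $p\hat B^{\frak a}$ and $p^{1/p}\hat B^{\frak a}$ are closed for the $\frak a$-adic topology via a Cauchy-sequence argument using the regular sequences $(x_2^n,\ldots,x_d^n,p)$. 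You instead factor the completion as $\widehat{(\hat B^I)}^p$ and argue on associated gradeds and Mittag-Leffler systems over $B$ itself.

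There is, however, a gap in your route that the paper's route is designed to avoid. Your steps rely on the assertions that $B/I$ is $p$-torsionfree and that $\mathrm{gr}_I B\cong(B/I)[X_2,\ldots,X_d]$ is $p$-torsionfree summand-by-summand. Both amount to the permuted sequence $(x_2,\ldots,x_d,p)$ being (quasi-)regular in $B$. But the hypothesis only gives $(p,x_2,\ldots,x_d)$ regular, and for a general, possibly non-Noetherian ring a regular sequence cannot be permuted freely; $B$ is only $p$-adically complete, not $\frak a$-adically complete, so the standard results licensing permutation (Koszul-regularity plus $\frak a$-adic completeness, or Noetherian-local hypotheses) do not apply to $B$ directly. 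The paper sidesteps this by passing first to $\hat B^{\frak a}$, which is $\frak a$-adically complete, where $(x_2^n,\ldots,x_d^n,p)$ and $(x_2^n,\ldots,x_d^n,p^{1/p})$ are regular, and then carries out the torsionfreeness and closedness arguments there. To repair your proof you would need either to justify the permutation in $B$ (which is not automatic) or to rebase the nested-completion argument in $\hat B^{\frak a}$ rather than $B$ — at which point it essentially becomes the paper's argument. The remaining steps (cofinality, $\widehat R^p/p = R/p$ for $p$-torsionfree $R$, and the Frobenius untilting) are correct.
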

   
   \proof Since $\frak a$ is finitely generated, $\hat B^{\frak a}$ is $\frak a$-adically complete; $(p, x_2, \ldots, x_d)$ is a regular sequence in $\hat B^{\frak a}$, and so is $(x_2^n , \ldots, x_d^n, p)$ (\resp $(x_2^n , \ldots, x_d^n, p^{\frac{1}{p}})$)   for every $n>0$. In particular, $\hat B^{\frak a}$ is $p$-torsionfree and $p$-adically complete. Let us show that the ideal $p\hat B^{\frak a}$ (\resp $p^{\frac{1}{p}}\hat B^{\frak a}$)  is closed for the $\frak a$-adic topology, or equivalently, for the $(x_2, \ldots, x_d)$-adic topology. Indeed, if an element $b\in B^{\frak a}$ can be written $ pc_n + d_n$ with $d_n\in (x_2^n , \ldots, x_d^n)B^{\frak a}$ for any $n>0$, then $ pc_n$ is a  $(x_2, \ldots, x_d)$-adic Cauchy sequence. Since $(x_2^n , \ldots, x_d^n, p)$ is a regular sequence, $c_n$ is itself a $(x_2, \ldots, x_d)$-adic Cauchy sequence, and its limit $c$ satisfies $b= pc$. One sees in the same way that  $p^{\frac{1}{p}}\hat B^{\frak a}$   is $\frak a$-adically closed. 
   
     It follows that the map $\hat B^{\frak a}/p \to \widehat{B/p}^{\frak a} $ (\resp $\hat B^{\frak a}/p^{\frac{1}{p}} \to \widehat{B/p^{\frac{1}{p}}}^{\frak a} $) is an isomorphism. On the other hand, denoting by ${\bar{\frak a}}$   the image of $\frak a$ in $B/p$, one has ${\bar{\frak a}}^{pd}\subset  F{\bar{\frak a}}\subset {\bar{\frak a}}^p$, hence $\widehat{B/p}^{\frak a}=\widehat{B/p}^{\bar{\frak a}}=\widehat{B/p}^{F {\bar{\frak a}}} $. Since $B$ is perfectoid, $F$ induces an isomorphism  $\widehat{B/p^{\frac{1}{p}}}^{{\frak a}} = \widehat{B/p^{\frac{1}{p}}}^{\bar{\frak a}} \stackrel{\sim}{\to} \widehat{B/p}^{F{\bar{\frak a}}} = \widehat{B/p}^{ {\frak a}}$, hence also an isomorphism $\hat B^{\frak a}/p^{\frac{1}{p}} \stackrel{\sim}{\to} \hat B^{\frak a}/p $. 
   \qed 
                 
   \subsubsection{}   The {\it tilt} of $B$ is $ B^\flat :=  \lim_{F}\,B/p$. This is a perfectoid $\hat\sK^{\flat\sm o}$-algebra, \ie a $p^\flat$-adically complete, $p^\flat$-torsionfree, perfect $\hat\sK^{\flat\sm o}$-algebra. 
 The natural morphism    \begin{equation}\label{2.1} B^\flat/p^\flat \to B/p  \end{equation}
is an isomorphism. This lifts to a natural isomorphism 
  \begin{equation}\label{2.2} B^\natural :=  W(B^{\flat})\hat\otimes_{W(\hat\sK^{\flat\sm o})} \hat\sK^{\sm o} \stackrel{\sim}{\to} B  \end{equation} 
 (this is well-known for $p^{-\frac{1}{p^\infty}}B =  B[\frac{1}{p}]^{\sm o}$ (Fontaine-Scholze); it follows that \eqref{2.2} is injective in general, and surjectivity is checked by reducing mod. $p$). 

    \subsubsection{}\label{+perf} Let $R$ is a complete local domain of mixed characteristic $(0,p)$, fix a compatible system of roots $p^{\frac{1}{p^i}}$ in $R^+$, and view $R^+$ as a $\sK^{\sm o}$-algebra. Then $\widehat{R^+}$ is perfectoid. 
    
    Indeed, $\widehat{R^+}$ is $p$-torsionfree like $\widehat{R^+}$; since $R^+$ is normal, an equation $x^p= py$ in $R^+$ implies $x/p^{\frac{1}{p}}\in R^+$;  on the other hand, any element of $R^+$ admits $p$-th roots; one concludes that $F$ induces an isomorphism   $R^+/p^{\frac{1}{p}} \stackrel{\sim}{\to} R^+/p$.

   \subsection{Almost perfectoid algebras.}  Let $B$ be a uniform  (complete) $\hat\sK^{\sm o}$-algebra. Let $\pi$ be a non-zero element of $B$ such that a compatible system of roots $\pi^{\frac{1}{p^i}}$ exists (which we fix).    
    \subsubsection{}  We say that $B$ is {\it $\pi^{\frac{1}{p^\infty}}$-almost perfectoid} if it is {\it uniform} and if $F$ induces an almost isomorphism 
      $B/p^{\frac{1}{p}} \stackrel{\sim^a}{\to} B/p$,  \ie kernel and cokernel are killed by  $\pi^{\frac{1}{p^\infty}}$, cf \cite[3.5.4]{A1}.  
       
Its tilt $ B^\flat :=  \lim_{F}\,B/p$ is then a perfectoid $\hat\sK^{\flat\sm o}$-algebra, with a specific element $\pi^\flat =(\ldots, \pi^{\frac{1}{p}}, \pi)$. Moreover  $B^\natural := W(B^{\flat})\hat\otimes_{W(\hat\sK^{\flat\sm o})} \hat\sK^{\sm o}$ is a perfectoid $\hat\sK^{\sm o}$-algebra, and {\it the natural morphism $B^\natural \to B$ is both injective and a $(p\pi)^{\frac{1}{p^\infty}}$-isomorphism } (hence a $\pi^{\frac{1}{p^\infty}}$-isomorphism  if $\pi\in p^{\frac{1}{p^\infty}}B$). Here, in $B^\natural$, $\pi^{\frac{1}{p^i}}$ is identified with the Teichm\"uller lift $[(\pi^{\flat})^{ \frac{1}{p^i}}]$. The formation of $B^\natural$ is functorial in $B$.

Obviously, a uniform $\hat\sK^{\sm o}$-algebra $B$ which is $\pi^{\frac{1}{p^\infty}}$-almost isomorphic to a $\pi^{\frac{1}{p^\infty}}$-almost perfectoid algebra is $\pi^{\frac{1}{p^\infty}}$-almost perfectoid.
 
    \subsubsection{}\label{path} Almost ring theory over the setting $(B,  \pi^{\frac{1}{p^\infty}}B)$ satisfies the general assumption of \cite{GR}: $( \pi^{\frac{1}{p^\infty}}B)^{\otimes 2}$ is flat over $B$ (\cite[2.1.7]{GR}). However, certain ``pathologies" occur when $\pi$ is a zero-divisor in $B$. In order to reduce questions about almost perfectoid algebras to the case when $\pi$ is a non-zero divisor, one may replace $B$ by $\pi^{-\frac{1}{p^\infty}}B$, which is still uniform  \cite[2.5.2]{A1}, hence reduced. The elementary but crucial observation is that {\it in a reduced $\Z[\pi^{\frac{1}{p^\infty}}]$-algebra, any $\pi$-torsion element is $\pi^{\frac{1}{p^\infty}}$-torsion}. Therefore {\it $B\to \pi^{-\frac{1}{p^\infty}}B$ is an almost isomorphism} (so that  $\pi^{-\frac{1}{p^\infty}}B$ is almost perfectoid if $B$ is), and the image of $\pi$ in $\pi^{-\frac{1}{p^\infty}}B$ is a non-zero divisor.  Moreover, if $B$ is $p$-torsionfree, so is $\pi^{-\frac{1}{p^\infty}}B$.

      \subsection{ Cohen-Macaulay algebras.}  Let $R$ be a noetherian local ring of dimension $d$, with maximal ideal $\frak m$. 
        Let $\underline x = (x_1, x_2, \ldots, x_d)$ be a system of parameters, and let $B$ be a (not necessarily noetherian) $R$-algebra.
      
     We say that $\underline x$ becomes regular in $B$, or that $B$ is  {\it Cohen-Macaulay w.r.t. $\underline x$} if for any $i$, 
multiplication by $x_{i+1}$ is injective on $B/(x_1, \ldots, x_i)B$, and $B\neq (x_1, \ldots, x_d)B$. It is a {\it Cohen-Macaulay $R$-algebra} if it is Cohen-Macaulay w.r.t. any system of parameters of $R$.  

\smallskip We shall use freely the following facts: if $B$ is $\frak m$-complete, it is Cohen-Macaulay as soon as it is Cohen-Macaulay w.r.t. some $\underline x\,$; it is then a Cohen-Macaulay $S$-algebra for any factorization $R\to S\to B$ such that $S$ is local and finite over $R$ (since the image of a system of parameters of $R$ in $S$ is a system of parameters, and $B$ is also $\frak m_S$-complete). An algebra over a regular ring $R$ is Cohen-Macaulay if and only it is faithfully flat \cite[1.2.d]{HH2}.  

      \subsection{ Almost Cohen-Macaulay algebras.}  Let $B$ be a uniform (complete) $\hat\sK^{\sm o}$-algebra, and let $\pi^{\frac{1}{p^\infty}}$ be a system of $p^{th}$-power roots of some element $\pi \in B \setminus \{0\}$. 
     
       We say that $\underline x$ becomes $\pi^{\frac{1}{p^\infty}}$-almost regular in $B$, or that $B$ is  {\it $\pi^{\frac{1}{p^\infty}}$-almost Cohen-Macaulay w.r.t. $\underline x$} if for any $i$, 
multiplication by $x_{i+1}$ is almost injective on $B/(x_1, \ldots, x_i)B$, and $B / (x_1, \ldots, x_d)B$ is not almost zero, \ie  $\pi^{\frac{1}{p^\infty}}B\not\subset (x_1, \ldots, x_d)B$. Since $(x_1, \ldots, x_d)R$ contains some power of $\frak m$, the last condition is equivalent to $\pi^{\frac{1}{p^\infty}}B\not\subset \frak m B$, \cf \cite[4.1]{A2}.

\smallskip This terminology may be slightly misleading since ``Cohen-Macaulay" does not formally imply  ``almost Cohen-Macaulay":  $\pi^{\frac{1}{p^\infty}}\not\subset \frak m B$ is stronger than $B\neq \frak m B$. The following proposition allows to settle this issue in some generality.

\begin{prop}\label{P0} Let $R$ be a complete noetherian local domain with maximal ideal $\frak m$, and let $ B$ be a Cohen-Macaulay algebra w.r.t. some system of parameters $\underline x$ of $R$.  Assume that $B$ contains a system of $p^{th}$-power roots $\pi^{\frac{1}{p^\infty}}$ of some element $\pi \in B$. 

\begin{enumerate} \item If $\pi^{\frac{1}{p^\infty}}B \cap R \neq 0$, then $B$  is a $\pi^{\frac{1}{p^\infty}}$-almost Cohen-Macaulay $R$-algebra w.r.t. $\underline x$. 
  \item Let $\frak p$ be a prime ideal of $R$ such that $\pi^{\frac{1}{p^\infty}}B \cap  R \not\subset \frak p R$. 
  Then $ B/(\frak mB +\sqrt{\frak p B}) $ is not $\pi^{\frac{1}{p^\infty}}$-almost zero.
 \end{enumerate} 
\end{prop}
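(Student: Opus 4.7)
\emph{Part (1).} Since $B$ is Cohen-Macaulay with respect to $\underline x$, multiplication by $x_{i+1}$ is injective (hence $\pi^{1/p^\infty}$-almost injective) on $B/(x_1,\ldots,x_i)B$. Only the non-almost-triviality $\pi^{1/p^\infty}B\not\subset\frak mB$ remains to be verified. I would argue by contradiction: if $\pi^{1/p^\infty}B\subset\frak mB$, then for every $k,j$ the identity $\pi^{1/p^k}=(\pi^{1/p^{k+j}})^{p^j}$ gives $\pi^{1/p^k}\in(\frak mB)^{p^j}\subset\frak m^{p^j}B$, so $\pi^{1/p^k}\in\bigcap_n\frak m^n B$ for every $k$. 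Writing a nonzero $r\in\pi^{1/p^\infty}B\cap R$ as $r=\pi^{1/p^k}b_k$, we get $r\in\bigcap_n\frak m^n B=\bigcap_n(x_1^n,\ldots,x_d^n)B$, the equality holding since $(\underline x)R$ is $\frak m$-primary so the two filtrations of $B$ are cofinal.

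To deduce $r=0$ I would invoke Hochster's solid-closure machinery (as suggested by Gabber). The regularity of $\underline x$ on $B$ together with $B/(\underline x)B\neq 0$ forces the canonical class $[1]\in H^d_{\frak m}(B)=\varinjlim_n B/(x_1^n,\ldots,x_d^n)B$ to be nonzero, so $B$ is \emph{solid} over $R$. Since $R$ is a complete local noetherian domain admitting a big Cohen-Macaulay algebra (by~\cite{A2}), the solid closure of any parameter ideal of $R$ equals the ideal itself, equivalently the colon-capturing statement $(x_1^n,\ldots,x_d^n)B\cap R=(x_1^n,\ldots,x_d^n)R$ holds for every solid $R$-algebra, in particular for our $B$. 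Intersecting over $n$ and applying Krull's intersection theorem in $R$ yields $r\in\bigcap_n(x_1^n,\ldots,x_d^n)R=0$, contradicting $r\neq 0$.

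\emph{Part (2).} The plan parallels~(1). Suppose for contradiction $\pi^{1/p^\infty}B\subset\frak mB+\sqrt{\frak pB}=:J$, and pick $r\in\pi^{1/p^\infty}B\cap R$ with $r\notin\frak p$. Since $\sqrt{\frak pB}$ is a $B$-ideal closed under products, the binomial expansion yields $J^{p^j}\subset\frak m^{p^j}B+\sqrt{\frak pB}$; applying this to $\pi^{1/p^l}=(\pi^{1/p^{l+j}})^{p^j}$ and writing $r=\pi^{1/p^l}b_l$ gives $r\in\bigcap_M(\frak m^MB+\sqrt{\frak pB})$. Equivalently, the image of $r$ in $\bar B:=B/\sqrt{\frak pB}$ lies in $\bigcap_M\frak m^M\bar B$. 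Setting $\bar R:=R/\frak p$ (a complete local noetherian domain of dimension $d-\mathrm{ht}(\frak p)$), the plan is to rerun the argument of~(1) in the pair $(\bar R,\bar B)$: colon-capturing combined with Krull would then force the image $\bar r\in\bar R$ of $r$ to vanish, i.e.\ $r\in\frak p$, contradicting $r\notin\frak p$. The hard part is establishing the hypotheses needed for this reduction: (a)~injectivity of $\bar R\to\bar B$, equivalently $\sqrt{\frak pB}\cap R=\frak p$, which is a going-down-type statement; and (b)~solidity of $\bar B$ over $\bar R$, i.e.\ $H^{d-\mathrm{ht}(\frak p)}_{\bar{\frak m}}(\bar B)\neq 0$. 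Both are local-cohomology statements whose proof would exploit the Noether normalization $A\subset R$ over which $B$ is faithfully flat, together with a careful analysis of the minimal primes of $\frak pB$ in $B$.
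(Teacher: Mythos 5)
Your plan for part (1) is over-engineered and contains one outright false claim. The assertion that ``the solid closure of any parameter ideal of $R$ equals the ideal itself'' because $R$ admits a big Cohen--Macaulay algebra is not true: Roberts' cubic-cone example shows that in equal characteristic $0$ (where big CM algebras are classical) the solid closure of a parameter ideal can strictly exceed the ideal. What \cite[5.10]{H2} actually gives is that solid closure is contained in \emph{integral} closure, and that, together with $\bigcap_n\overline{\frak m^n}=0$, would repair your step. But none of this is needed for (1). The paper passes to the Cohen subring $A\subset R$, regular with $\frak m_A=(\underline x)A$ and $R$ finite over $A$. Over the regular ring $A$ the Cohen--Macaulay algebra $B$ is faithfully flat, so $\frak m_A^n B\cap A=\frak m_A^n$ and Krull gives $\bigcap_n\frak m^n B\cap A=0$; since the nonzero ideal $\pi^{1/p^\infty}B\cap R$ of the domain $R$ must meet $A$ nontrivially, idempotence of $\pi^{1/p^\infty}B$ produces the contradiction without any solid-closure input. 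Your idempotence observation and the cofinality with $(x_1^n,\ldots,x_d^n)B$ are correct, just not the most economical route.

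Part (2) contains the genuine gap. You correctly reduce to showing a nonzero $\bar r\in\bar R=R/\frak p$ lies in $\bigcap_n\frak m'^n\bar B\cap\bar R$ and correctly flag the two inputs you would need, namely (a)~$\sqrt{\frak pB}\cap R=\frak p$ and (b)~solidity of $\bar B=B/\sqrt{\frak pB}$ over $\bar R$; but then you stop, declaring these ``the hard part'' and gesturing at Noether normalization and minimal primes. That is precisely where the proof lives, and the paper's actual route (Gabber's suggestion) is different from what you envisage: it does \emph{not} prove (a) or (b). The obstruction is that Hochster's solidity stability results apply to quotients by \emph{finitely generated} ideals, not to $\sqrt{\frak pB}$ directly. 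The paper therefore writes $\sqrt{\frak pB}=\bigcup_\alpha I_\alpha$ as a filtered union of finitely generated $B$-ideals containing $\frak pB$; from $B$ solid over $R$ one gets $B/\frak pB$, hence each $B/I_\alpha$, solid over $R'$ via \cite[2.12, 2.1(m)]{H2}; then \cite[5.10]{H2} gives $(JB+I_\alpha)\cap R'\subset\bar J$ for each $\alpha$, and only after this does one take the filtered union to conclude $J\bar B\cap R'\subset\bar J$ and finish with $\bigcap_n\overline{\frak m'^n}=0$. Note that (a) is never assumed --- the argument needs only $\bar r\neq0$, which holds by the choice $r\notin\frak p$, and in fact $\sqrt{\frak pB}\cap R=\frak p$ drops out a posteriori --- while (b) is deliberately side-stepped because it is unclear that solidity passes to a quotient by an arbitrary (non-finitely-generated) radical ideal. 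Without the filtered-union device your plan does not close.
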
  

\proof (1) (\cf also \cite[1.1.2]{A2}) By Cohen's structure theorem, there is a complete regular local subdomain $A$ of $R$ whose maximal ideal is generated by $\underline x$, and $R$ is a finite $A$-module. 
 Since $R$ is a domain and $\pi^{\frac{1}{p^\infty}} B \cap R \neq   0$,  $\pi^{\frac{1}{p^\infty}} B \cap A \neq   0$ as well. On the other hand, $B$ is Cohen-Macaulay over the regular local ring $A$, hence faithfully flat, and in particular  $\frak m_A^n B \cap A = \frak m_A^n$ for every $n$, and their intersection is $0$ by Krull. Since $\frak m_A R$ is $\frak m$-primary, $ \cap\, \frak m^n B \cap A = 0$. If the idempotent ideal  $\pi^{\frac{1}{p^\infty}}B  $ is contained in $\frak m B$, it is contained in $\cap\, \frak m^n B$, a contradiction.  
 
\smallskip (2) Let us set $R'= R/\frak p$, a complete local domain with maximal ideal $\frak m'= \frak m/\frak p$. We cannot apply (1) to the $R'$-algebra $ B/ \sqrt{\frak p B}$, which might not be Cohen-Macaulay; instead, following O. Gabber's suggestion, we will use Hochster's solid closure theory \cite{H2}.  

Let us write $\sqrt{\frak p B}$ as a filtered union of finitely generated ideals $I_\alpha$ of $B$ containing ${\frak p B}$. Since $ B$ is a Cohen-Macaulay algebra w.r.t. $\underline x$ and $(\underline x)R$ is $\frak m$-primary, $B$ has a nonzero $R$-dual \cite[2.4]{H2}. It follows that $B/\frak p B$ has a nonzero $R'$-dual \cite[2.12]{H2}, and further that every $B/I_\alpha$ has a nonzero $R'$-dual \cite[2.1 (m)]{H2}. By \cite[5.10]{H2}\footnote{in the terminology of \cite{H2}, $J (B/I_\alpha) \cap R'$ belongs to the solid closure of $J$.}, it follows that for any given ideal $J$ of $R'$ and every $\alpha$, the intersection $J (B/I_\alpha) \cap R'$  in $B/I_\alpha $ (which equals the intersection $(J B + I_\alpha  ) \cap R'$  in $B$) is contained in the integral closure $\bar J$ of $J$. Their filtered union $J (B/\sqrt{\frak p B}) \cap R'$ is therefore contained in $\bar J$.  

Now $ B/(\frak mB +\sqrt{\frak p B}) = (B/\sqrt{\frak p B})/ \frak m' (B/\sqrt{\frak p B})$, and by the same argument as in (1), it suffices to see that $\cap_n \,\frak m'^n  (B/\sqrt{\frak p B}) \cap R' = 0$. Applying the above observation to $J= \frak m'^n$, this is derived from the classical vanishing of $\,\cap\,  \overline{\frak m'^n}\, $  \cite[5.3.4]{HuS}. \qed

       \section{Getting rid of ``almost": Shimomoto's construction.} 
       \subsection{} Let $R$ be a complete noetherian local domain of mixed characteristic with algebraically closed residue field $k= k^+$ of characteristic $p$. Let $(x_1, x_2, \ldots, x_d)$ be a system of parameters for $R$, with $x_1=p$.
        The following synthetizes, strengthens and recasts according to our needs the results of \cite{Sh2}\footnote{actually in \cite{Sh2}, the $\frak m_R$-adic completion is not performed, and one only gets a Cohen-Macaulay algebra $C$ w.r.t. $(p, x_2, \ldots, x_d)$.} and their proof. 
        
    \begin{thm}\label{T3}  Let $B$ be a $\pi^{\frac{1}{p^\infty}}$-almost $\hat\sK^{\sm o}$-perfectoid almost Cohen-Macaulay $R$-algebra w.r.t. $\underline x\,$ for some 
  {$\pi \in p^{\frac{1}{p^\infty}}B\,$} (it is tacitly assumed that the $W(k)$-structures coming from $R$ and $\hat\sK^{\sm o}$ coincide).
    \begin{enumerate}   
   \item 
   There exists a $\hat\sK^{\sm o}$-perfectoid Cohen-Macaulay $R$-algebra $C$ (which one may assume to be $\frak m_R$-adically complete) and a morphism $B^{\natural}\to C$.  
   \item One may assume moreover that $C$ is a $R^+$-algebra. 
    \end{enumerate} \end{thm}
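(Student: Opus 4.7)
The plan is to run Shimomoto's tilting argument: tilt $B$ to characteristic $p$, apply (a suitable variant of) Dietz's refinement of the Hochster--Huneke construction there to upgrade ``almost Cohen--Macaulay'' to honestly Cohen--Macaulay, and then untilt back to mixed characteristic.

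As preliminary reductions, I would first replace $B$ by $\pi^{-\frac{1}{p^\infty}}B$ (via \ref{path}) so that $\pi$ becomes a non-zero-divisor; this preserves both the almost Cohen--Macaulay and almost perfectoid hypotheses. Then I would replace $B$ by $B^\natural$: because $\pi \in p^{\frac{1}{p^\infty}}B$, the injection $B^\natural \hookrightarrow B$ is a $\pi^{\frac{1}{p^\infty}}$-isomorphism, so $B^\natural$ is honestly $\hat\sK^{\sm o}$-perfectoid and remains almost Cohen--Macaulay w.r.t.\ $\underline x$. After possibly enlarging $B$ so that each parameter $x_i$ admits a compatible system of $p$-power roots (which preserves all hypotheses), one obtains specific elements $x_i^\flat \in B^\flat$ lifting $\bar x_i \in B/p$ via the identification $B^\flat/p^\flat \cong B/p$. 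The tilt $B^\flat$ is then a perfect $\hat\sK^{\flat\sm o}$-perfectoid algebra which is almost Cohen--Macaulay with respect to $\underline x^\flat = (p^\flat, x_2^\flat, \ldots, x_d^\flat)$ (the almost regularity passes transparently through the mod-$p$ tilting isomorphism).

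At this point I would invoke Dietz's characteristic-$p$ machinery: by iteratively performing Cohen--Macaulay modifications on $B^\flat$ to trivialize the $\pi^\flat$-almost-zero obstructions to honest regularity of $\underline x^\flat$, and then passing to perfection, one produces a perfect $\hat\sK^{\flat\sm o}$-perfectoid, honestly Cohen--Macaulay $R^\flat$-algebra $D^\flat$ equipped with a compatible $B^\flat$-algebra structure. The critical point here---non-triviality, i.e.\ $D^\flat \neq \underline x^\flat D^\flat$---means that the modification procedure does not collapse the algebra to zero; it follows from Hochster's non-zero-$R^\flat$-dual/solid-closure argument, in the spirit of the proof of Proposition \ref{P0}(2).

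Now define
\[ C \; := \; W(D^\flat)\,\hat\otimes_{W(\hat\sK^{\flat\sm o})}\,\hat\sK^{\sm o}, \]
which is $\hat\sK^{\sm o}$-perfectoid; the map $B^\flat \to D^\flat$ induces the desired morphism $B^\natural \to C$ by functoriality of $\natural$. Since $C$ is $p$-torsionfree and $C/pC \cong D^\flat/p^\flat D^\flat$ carries $(x_2^\flat, \ldots, x_d^\flat)$ as a regular sequence, the full sequence $\underline x = (p, x_2, \ldots, x_d)$ is regular on $C$, and the inequality $C \neq \underline x C$ transfers from $D^\flat$. The $\mathfrak m_R$-adic completion of $C$ remains both perfectoid (Proposition \ref{P2}) and Cohen--Macaulay, settling (1). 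For (2), one runs the whole construction starting from $B$ enlarged at the outset by (a uniformization of) $B\hat\otimes_R R^+$; since $R \to R^+$ is integral, this enlargement remains almost perfectoid and almost Cohen--Macaulay, and the resulting $C$ is naturally an $R^+$-algebra. The main obstacle is the characteristic-$p$ step, specifically securing non-triviality of $D^\flat$: the upgrade from ``almost-nontrivial'' to ``honestly nontrivial'' is the delicate point, exactly what Gabber's solid-closure argument in Proposition \ref{P0}(2) is designed to resolve.
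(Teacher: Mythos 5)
Your overall plan (tilt, apply Dietz, untilt) does follow the paper's blueprint, but there is a genuine gap that the paper goes to considerable lengths to close and that your proposal skips entirely: \emph{how does $C$ acquire an $R$-algebra structure?} You replace $B$ by $B^\natural$ and then run the whole construction on the tilt side. But, as the paper itself warns, $B^\natural$ need not be an $R$-algebra --- $B^\natural\hookrightarrow B$ is only a $\pi^{\frac{1}{p^\infty}}$-almost isomorphism, and elements of $R$ landing in $B$ have no reason to lie in $B^\natural$. Consequently $B^\flat$ is only a $k[[p^\flat, x_2^\flat,\ldots,x_d^\flat]]$-algebra, Dietz gives you a Cohen--Macaulay $k[[\underline x^\flat]]$-algebra $D^\flat$, and after untilting you obtain $C$ with a $W(k)[[x_2,\ldots,x_d]]$-structure via Teichm\"uller lifts $[x_i^\flat]$. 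This does \emph{not} extend to a map $R\to C$: $R$ is a finite but generally non-\'etale extension of $W(k)[[\underline x]]$, and such an extension of the structure map need not exist. (Notice also that $[x_i^\flat]\neq x_i$ in $B$ in general, only $[x_i^\flat]\equiv x_i \bmod p$, so even the $W(k)[[\underline x]]$-structure you get on $C$ is not obviously compatible with the one $B$ started with.) Writing ``$R^\flat$-algebra'' is a symptom of this: $R$ has no tilt, and your $D^\flat$ only lives over $k[[\underline x^\flat]]$.

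The paper resolves exactly this by passing to $\tilde R := R[\frac{1}{p}]^{et,\sm o}$, whose $p$-adic completion $\hat{\tilde R}$ is honestly perfectoid (hence $\hat{\tilde R}=\hat{\tilde R}^\natural$), and to the integral closure $\tilde B$ of $B$ in $\tilde R\otimes_R B[\frac{1}{\pi}]$. The perfectoid Abhyankar lemma is then the key input showing that $\hat{\tilde B}$ is still almost perfectoid and $\hat{\tilde B}/p$ is almost faithfully flat over $B/p$ (so the almost Cohen--Macaulay property survives). Since $\hat{\tilde R}\to \hat{\tilde B}$ is a map of (almost) perfectoid algebras, functoriality of $\natural$ gives $\hat{\tilde R}\to\hat{\tilde B}^\natural$, and the $R\to\hat{\tilde R}$ structure is carried along through tilting, Dietz, and untilting, yielding $R\to\hat{\tilde R}\to\hat{\tilde B}^\natural\to C$. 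Without some device of this kind, the construction produces a perfectoid Cohen--Macaulay algebra that you cannot name an $R$-algebra, so Theorem \ref{T3} is not proved. The same lacuna infects your treatment of (2): replacing $B$ by ``a uniformization of $B\hat\otimes_R R^+$'' is not enough, because $R\to R^+$ is integral but wildly ramified, so there is no reason $B\hat\otimes_R R^+$ should be almost perfectoid or almost Cohen--Macaulay; the paper instead writes $R^+$ as a colimit of finite normal $R_\beta$, chooses \'etale loci $R_\beta[\frac{1}{p\prod g_{\beta,i}}]/R[\ldots]$, and applies the perfectoid Abhyankar lemma level-by-level, invoking the directed-colimit clause (Proposition \ref{P3}(2)) of Dietz's theory. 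Finally, a smaller point: the non-degeneracy of Dietz's modification process here is packaged inside the notion of ``seed'' (\cite[4.8, 3.7]{D}), not the solid-closure argument of Proposition \ref{P0}(2) --- that proposition is used elsewhere (in the proof of Theorem \ref{T5}) to check the almost Cohen--Macaulay hypothesis is non-vacuous, not inside the proof of \ref{T3}.
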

   
  The existence of $B^{\natural}\to C$ will be crucial in the proof of Theorem \ref{T2} (caution: $B^{\natural}$ may not be an $R$-algebra).
 In its standard version, Hochster's  modification process to construct a Cohen-Macaulay $R$-algebra $C$, as applied for instance in \cite{A2}, starts with an almost Cohen-Macaulay $B$ as an auxiliary object, without relating $B$ and $C$ explicitly (the almost Cohen-Macaulay condition is there to ensure that the process does not degenerate). Dietz's work remedies this shortfall by providing a map $B \to C$ in char. $p$, while Shimomoto ``transposes" it into mixed characteristic by tilting\footnote{after the release of the first version of this paper, I learned that O. Gabber has a alternative way of passing from almost perfectoid almost Cohen-Macaulay algebras to perfectoid  Cohen-Macaulay algebras, using an ultraproduct technique.}.
 
 For convenience, we restate the fragment from Dietz's theory that we need in the following form:
 
 \begin{prop}\label{P3} Let $S$ be a complete noetherian local $k$-algebra with residue field $k$. \begin{enumerate}   
   \item  Any $(\pi^\flat)^{\frac{1}{p^\infty}}$-almost Cohen-Macaulay $S$-algebra $D$ maps to a Cohen-Macaulay $S$-algebra which is a perfect domain (\resp $\hat\sK^{\flat \sm o}$-perfectoid, if $p^\flat$ is a parameter of $S$).
   \item Let $(D_\alpha)$ be a directed system of $(\pi_\alpha^\flat)^{\frac{1}{p^\infty}}$-almost Cohen-Macaulay $S$-algebras (for some $\pi_\alpha^\flat\in D_\alpha \setminus 0$). Then ${\rm{colim}} \, D_\alpha$ maps to a Cohen-Macaulay $S$-algebra which is a perfect domain (\resp $\hat\sK^{\flat \sm o}$-perfectoid, if $p^\flat$ is a parameter of $S$).  \end{enumerate}
 \end{prop}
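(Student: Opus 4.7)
The proposition repackages the part of Dietz's theory \cite{D} that is used later in the paper, plus routine perfection and completion steps, so the proof should invoke Dietz and verify that these two extra operations do not destroy Cohen-Macaulayness. Recall the key notion from \cite{D}: over a complete noetherian local ring $S$ of characteristic $p$, an $S$-algebra $T$ is a \emph{seed} if $T$ maps to some Cohen-Macaulay $S$-algebra, equivalently (by Hochster's partial-algebra-modification machinery) if the sequence of modifications starting from $T$ does not collapse to the zero algebra. From \cite{D} I will use two facts: (a) every $(\pi^\flat)^{\frac{1}{p^\infty}}$-almost Cohen-Macaulay $S$-algebra is a seed, the ``almost'' hypothesis being exactly what prevents the modification process from degenerating; (b) a directed colimit of seeds is a seed, and moreover the target Cohen-Macaulay algebra can be arranged to be a domain.

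For (1), Dietz's fact (a) gives a map $D\to C_0$ with $C_0$ a Cohen-Macaulay $S$-algebra domain. We then replace $C_0$ by its Frobenius perfection $C := \varinjlim_F C_0$. Since $C_0$ is reduced, $F:C_0\to C_0$ is injective, so $C$ is an increasing union of copies of $C_0$ and remains a domain; it is perfect by construction. To see that $\underline x = (x_1,\ldots,x_d)$ is still a regular sequence on $C$, note that an element of $C$ is represented at some level $n$ by $a\in C_0$, and that multiplication by $x_i$ in $C$ corresponds, at level $n$, to multiplication by $x_i^{p^n}$ in $C_0$; since $(x_1^{p^n},\ldots,x_d^{p^n})$ is a permutable regular sequence on $C_0$, the claim follows. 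The non-triviality $(x_1,\ldots,x_d)C \neq C$ likewise reduces to $(x_1^{p^N},\ldots,x_d^{p^N})C_0\neq C_0$ at some level. Finally, when $p^\flat$ is a parameter of $S$, take $x_1 = p^\flat$ and replace $C$ by its $p^\flat$-adic completion; by the char-$p$ analogue of Proposition \ref{P2} this completion is $\hat\sK^{\flat\sm o}$-perfectoid, and being the $x_1$-adic completion of an algebra on which $\underline x$ is already a regular sequence, it remains Cohen-Macaulay w.r.t. $\underline x$ (standard).

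For (2), by (a) each $D_\alpha$ is a seed, and by (b) so is $\mathrm{colim}\,D_\alpha$; one then applies the perfection / completion procedure of the previous paragraph to the Cohen-Macaulay $S$-algebra received from that seed.

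The main obstacle, and what one is really importing from \cite{D}, is (b). The ``pseudo-uniformizers'' $\pi_\alpha^\flat$ are not uniformly bounded in $\mathrm{colim}\,D_\alpha$, so one cannot equip the colimit with a single almost-Cohen-Macaulay structure and feed it back into (1); Dietz circumvents this by reformulating ``seed'' through a combinatorial datum indexed only by the system of parameters (the modification tree), which passes to directed colimits. Everything else in the proof — perfection, passage to a domain, $p^\flat$-adic completion — is formal once a Cohen-Macaulay $C_0$ is in hand.
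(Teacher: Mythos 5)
Your proof is correct and takes essentially the same route as the paper: reduce to Dietz's seed theory, using \cite[4.8]{D} (``almost Cohen-Macaulay $\Rightarrow$ durable colon-killer $\Rightarrow$ seed''), \cite[7.8]{D} (a seed maps to a Cohen-Macaulay domain), and \cite[3.2]{D} (directed colimits of seeds are seeds), then pass to the perfectoid setting by $p^\flat$-adic completion. The one place you deviate is the perfection step: the paper invokes \cite[3.7]{D}, which says a seed already maps to a Cohen-Macaulay $S$-algebra that is \emph{perfect} and $\frak m_S$-separated, whereas you re-derive perfectness by hand via the Frobenius colimit $\varinjlim_F C_0$ and check directly that $(x_1,\ldots,x_d)$ stays regular there (your level-$n$ bookkeeping — multiplication by $x_i$ is multiplication by $x_i^{p^n}$ at level $n$ — is correct, since $(x_1^{p^n},\ldots,x_d^{p^n})$ is again a system of parameters). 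This is fine but unnecessary given Dietz's \cite[3.7]{D}; moreover, citing that result also hands you $\frak m_S$-separatedness for free, which makes the final $p^\flat$-adic completion step a bit cleaner to justify than in your write-up, where the preservation of the regular-sequence property under $p^\flat$-adic completion of a possibly non-separated ring is asserted as ``standard'' without comment.
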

 
     \begin{proof} $(1)$ In the terminology of \cite{D}, the fact that $D$ is $(\pi^\flat)^{\frac{1}{p^\infty}}$-almost Cohen-Macaulay translates into: $\pi^\flat$ is a ``durable colon-killer" over $S$; thus $D$ is a ``seed" by  \cite[4.8]{D}, and any seed maps to a Cohen-Macaulay algebra which is a perfect and $\frak m_S$-separated by  \cite[3.7]{D} and is also a domain by \cite[7.8]{D}) - hence after $p^\flat$-adic completion, to a  $\hat\sK^{\flat \sm o}$-perfectoid Cohen-Macaulay $S$-algebra if $p^\flat$ is a parameter of $S$. 
    
    For $(2)$, one observes that a directed colimit of seeds is a seed \cite[3.2]{D}.
        \end{proof}
     
 \begin{proof} (of Theorem \ref{T3}) 

$(1)$  
 Up to replacing $B$ by $B\langle \underline x^{\frac{1}{p^\infty}}\rangle[\frac{1}{p}]^{\sm o}$, we may also assume that {\it $B$ contains a system of $p^{th}$-power roots $x_i^{\frac{1}{p^\infty}}$ of each $x_i$} (indeed, $B\langle \underline x^{\frac{1}{p^\infty}}\rangle[\frac{1}{p}]^{\sm o}/p$ is $p^{\frac{1}{p^\infty}}$-almost faithfully flat over $B/p$ \cite[2.5.2]{A2}, hence $B\langle \underline x^{\frac{1}{p^\infty}}\rangle[\frac{1}{p}]^{\sm o}$ is, like $B$, $\pi^{\frac{1}{p^\infty}}$-almost Cohen-Macaulay w.r.t. $\underline x$). We may also replace $B$ by the $\pi^{\frac{1}{p^\infty}}$-almost isomorphic algebra   $\pi^{-\frac{1}{p^\infty}}B$, hence assume that {\it $\pi$ is not a zero divisor in $B$}, and that $B = \pi^{-\frac{1}{p^\infty}}B^\natural$.

\smallskip Let us first assume that {\it $R$ is normal}. 
  Let $\tilde R := R[\frac{1}{p}]^{et, \sm o}$ be the integral closure of $R$ in the maximal etale extension of $R[\frac{1}{p}]$  in some fixed algebraic closure $K^+$ of the field of fractions of $R$. Let us write $\tilde R= {{\rm{colim}}}\, R_\alpha$ as a directed colimit of finite normal extensions of $R$ such that $R_\alpha[\frac{1}{p}]$ is etale over $R[\frac{1}{p}]$. The $p$-adic completion $\hat{\tilde{R}} := \widehat{{\rm{colim}}}\, R_\alpha$ of $\tilde R$ is $\hat\sK^{\sm o}$-perfectoid, \cf \eg \cite[10.1]{Sh1}. 
  
  Let $\tilde B $ be the integral closure of $B $ in $\tilde R\otimes_R B[\frac{1}{\pi}]$.
   Its completion {\it $\hat{\tilde{B}}$ is a $\pi^{\frac{1}{p^\infty}}$-almost perfectoid $\hat{\tilde{R}}$-algebra, and  $\hat{\tilde{B}} /p$ is $\pi^{\frac{1}{p^\infty}}$-almost faithfully flat over $B/p$}. 
  
  Indeed,    $\hat{\tilde{B}}  $ is the completed colimit of 
    the integral closures $ B_{(\alpha)}$ of $B = \pi^{-\frac{1}{p^\infty}} B^\natural  $ in $ R_\alpha\otimes_R B[\frac{1}{\pi}]$. Since  $ R_\alpha\otimes_R B[\frac{1}{\pi}]$ is finite etale over $ B[\frac{1}{\pi}] = B^\natural[\frac{1}{\pi}]$, the perfectoid Abhyankar lemma \cite[0.3.1, 5.3.1]{A1}\footnote{see also a very short account of its proof in \cite{A3}. In the notation of \cite{A1}, $B^\natural[\frac{1}{p}]$ is $\sA$, $ R_\alpha\otimes_R B[\frac{1}{\pi}]$ is $\sB'$ and $ B_{(\alpha)}$ is $\sB^{\sm o}$.} applies to $(B^\natural[\frac{1}{p}], \,  R_\alpha\otimes_R B[\frac{1}{\pi}])$ and shows that $ B_{(\alpha)}$ is uniform, $(B_{(\alpha)})^\natural$ is perfectoid, $ B_{(\alpha)}$ is $\pi^{\frac{1}{p^\infty}}$-almost perfectoid, and 
  $B_{(\alpha)}/p$ is $\pi^{\frac{1}{p^\infty}}$-almost faithfully flat over $B/p$.  
  
  Since $\underline x$ is a $\pi^{\frac{1}{p^\infty}}$-almost regular sequence in $B$, we deduce that the image of $(x_2, \ldots, x_d)$ is also a  $\pi^{\frac{1}{p^\infty}}$-almost regular sequence in $\hat{\tilde{B}}/p$.  Tilting $\hat{\tilde B}$, we get a perfectoid $\hat\sK^{\flat \sm o}$-algebra $\hat{\tilde B}^\flat$ and a $\pi^{\flat\frac{1}{p^\infty}}$-almost regular sequence $\underline x^\flat = (p^\flat, x_2^\flat, \ldots, x_d^\flat) $, where $x_i^\flat = (\ldots, x_i^{\frac{1}{p}}, x_i)$ (note that the $\pi^{\flat\frac{1}{p^\infty}}$-almost isomorphism $\hat{\tilde{B}}^\flat/p^\flat\to  \hat{\tilde{B}}/p$ sends $\underline x^\flat$ to $\underline x$).   
   
  By Proposition \ref{P3} $(1)$, one can map $\hat{\tilde{B}}^\flat$ to a $\hat\sK^{\flat \sm o}$-perfect(oid) Cohen-Macaulay $k[[p^\flat, x_2^\flat, \ldots, x_d^\flat]]$-algebra.
    Untilting, we get a $\hat\sK^{ \sm o}$-perfectoid Cohen-Macaulay algebra w.r.t. $\underline x$. Completing $(p,x_2,\ldots, x_d)$-adically, we get a $\hat\sK^{ \sm o}$-perfectoid Cohen-Macaulay $R$-algebra $C$ (Proposition \ref{P2}), and morphisms $B^\natural \to \hat{\tilde B}^\natural\to C.   $
     
  \smallskip We next drop the assumption that $R$ is normal\footnote{we could actually dispense with this step which is subsumed in the proof of $(2)$ below.}.  Let $R^{\rm n}$ be the normalization of $R$ (which is again a complete noetherian local domain), and $g\in R$ be such that $R^{\rm n}[\frac{1}{pg}]$ is etale over $R[\frac{1}{pg}]$. We may (by the same argument as above) assume that $g$ is a non-zero divisor in $B$ and that $B$ contains $g^{\frac{1}{p^\infty}}$.  
   Let $\tilde B^{\rm n}$ be the integral closure of $ B $ in $(\tilde R^{\rm n}\otimes_R B)[\frac{1}{\pi g}]$.  We then construct  $C$ and morphisms $B^\natural  \to \hat{\tilde B}^{{\rm n}\natural}\to C$ as above, on replacing $(\tilde R, \tilde B , \pi )$ by $(\tilde R^{\rm n}, \tilde B^{\rm n}, \pi g)$.

   \medskip   $(2)$  Let us write $R^+$ as a directed colimit of finite normal $R$-subalgebras $R_\beta$, which are complete local domains with residue field $k$. Setting $\tilde R_\beta := R_\beta[\frac{1}{p}]^{et, \sm o}$ as above, we also have $R^+ = {\rm{colim}} \, \tilde R_\beta $. Recall that, once a compatible system of roots $p^{\frac{1}{p^i}}$ is chosen in $R^+$, the $p$-adic completion $\widehat{R^+}$ is perfectoid over $\hat\sK^{\sm o}$ (\ref{+perf}) .

  Let $\underline g_\beta = ( g_{\beta, 1}, g_{\beta, 2}, \ldots )$ be a finite sequence of elements of  $R_\beta$ such that $R_\beta[\frac{1}{p \Pi g_{\beta, i}}] $ is etale over $R[\frac{1}{p \Pi g_{\beta i}}] $. We consider the directed system formed by pairs $\underline\beta = (\beta,  \underline g_\beta)$ (with the order $(\beta,  \underline g_\beta)\leq (\beta',  \underline g_{\beta'})$ is $\beta\leq \beta'$  and $\underline g_\beta$ is part of the sequence $\underline g_{\beta'}$). 
  
  We define $B_{\underline\beta} :=  (\Pi g_{\beta i})^{-\frac{1}{p^\infty}} B\langle   \underline g_\beta^{ \frac{1}{p^\infty}}  \rangle[\frac{1}{p}]^{\sm o}$, which is $(\pi{\underline g_\beta})^{\frac{1}{p^\infty}}$-almost perfectoid. When ${\underline\beta}$ varies, they form a directed system of $R $-algebras. By \cite[2.5.2]{A2}, $B_{\underline\beta}/p$ is $(\pi{\underline g_\beta})^{\frac{1}{p^\infty}}$-almost faithfully flat over $B/p$.  
  
    We further define $\tilde B_{\underline\beta}$ to be the integral closure of $  B_{\underline\beta}\otimes_R \tilde R_\beta$ in  $  (B_{\underline\beta}\otimes_R \tilde R_\beta) [\frac{1}{p \Pi g_{\beta, i}}]$. They form a directed system of $ \tilde R_\beta$-algebras. Since
  $ B_{\underline\beta}$ contains a system of roots $(\Pi g_{\beta, i})^{\frac{1}{p^\infty}}$ of  the non-zero divisor $\Pi g_{\beta, i} \in  B_{\underline\beta}$, the perfectoid Abhyankar lemma shows as above that $ \hat{\tilde B}_{\underline\beta}$ is $(\pi{\underline g_\beta})^{\frac{1}{p^\infty}}$-almost perfectoid, and that $(\tilde B_{\underline\beta})/p$ is $(\pi{\underline g_\beta})^{\frac{1}{p^\infty}}$-almost faithfully flat over $B_{\underline\beta}/p$, hence over $B/p$, so that $ \hat{\tilde B}_{\underline\beta}$ is $(\pi{\underline g_\beta})^{\frac{1}{p^\infty}}$-almost Cohen-Macaulay w.r.t. \underline x. We may then imitate the argument in $(1)$: the $\hat{\tilde R}_\beta^{\flat}$-algebras $ \hat{\tilde B}_{\underline\beta}^\flat$ form a directed system of $(\pi{\underline g_\beta})^{\frac{1}{p^\infty}}$-almost Cohen-Macaulay $k[[\underline x^\flat]]$-algebras, hence their $p^\flat$-adically completed colimit  $\,  \widehat{\rm{colim}}_{\underline\beta} \, \hat{\tilde B}_{\underline\beta}^\flat\cong ( \widehat{\rm{colim}}_{\underline\beta} \, \hat{\tilde B}_{\underline\beta}^\natural)^\flat \,$ 
   maps  to a $\hat\sK^{\flat \sm o}$-perfect(oid) Cohen-Macaulay $k[[p^\flat, x_2^\flat, \ldots, x_d^\flat]]$-algebra by Proposition \ref{P3} $(2)$.
      Untilting and completing $(p,x_2,\ldots, x_d)$-adically, we get the perfectoid Cohen-Macaulay $R$-algebra $C$ (which is an algebra over 
     $\widehat{R^+} = \widehat{\rm{colim}} \,  \hat{\tilde R}_\beta$), and we also get a morphism $B^\natural \to \widehat{\rm{colim}} \, \hat{\tilde B}_{\underline\beta}^\natural \to C$.   \end{proof}
   
    \subsection{} In \cite[\S 4]{A2}, the existence of a Cohen-Macaulay $R$-algebra is shown by first constructing a $\pi^{\frac{1}{p^\infty}}$-almost perfectoid $\pi^{\frac{1}{p^\infty}}$-almost Cohen-Macaulay $R$-algebra $B\,$ (where $\pi\in pR$, $R[\frac{1}{\pi}]$ is etale over $W(k)[[x_2, \ldots, x_d]][\frac{1}{\pi}]$, and $B$ is $\pi$-torsionfree), and then applying the technique of algebra modifications to $B$. Using Theorem \ref{T3} instead of the latter, one gets the following more precise result:
 
 \begin{thm}\label{T4}\cite{Sh2}\footnote{see footnote 4.} For any complete noetherian local domain $R$ of mixed characteristic with perfect residue field $k$, there exists a $\hat\sK^{\sm o}$-perfectoid $\frak m_R$-adically complete Cohen-Macaulay $R$-algebra $C$. Moreover, one can assume that $C$ is an $R^+$-algebra (in such a way that the $\sK^{\sm o}$-structures coming from $R^+$ and $\hat\sK^{\sm o}$ coincide).     \qed \end{thm}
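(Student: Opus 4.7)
The plan is to obtain $C$ as the output of Theorem~\ref{T3} applied to the auxiliary algebra $B$ already constructed in \cite[\S 4]{A2}. Specifically, first choose a Cohen factorization $A := W(k)[[x_2, \ldots, x_d]] \hookrightarrow R$ making $R$ a finite $A$-module, together with an element $\pi \in pR$ such that $R[\tfrac{1}{\pi}]$ is etale over $A[\tfrac{1}{\pi}]$. The perfectoid construction of \cite[\S 4]{A2} then produces a $\pi^{\frac{1}{p^\infty}}$-almost perfectoid, $\pi^{\frac{1}{p^\infty}}$-almost Cohen-Macaulay $\hat\sK^{\sm o}$-algebra $B$ over $R$ (with respect to $\underline x = (p, x_2, \ldots, x_d)$) which is $\pi$-torsionfree.

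Next I would verify that the hypothesis ``$\pi \in p^{\frac{1}{p^\infty}} B$'' of Theorem~\ref{T3} is satisfied: since $\pi \in p R$ and the $\hat\sK^{\sm o}$-structure on $B$ provides a compatible system $p^{\frac{1}{p^i}}$, one has $\pi / p \in R \subset B$, hence $\pi \in p B \subset p^{\frac{1}{p^\infty}} B$. Applying Theorem~\ref{T3}(2) to this $B$ then directly yields a $\hat\sK^{\sm o}$-perfectoid, $\frak m_R$-adically complete, Cohen-Macaulay $R^+$-algebra $C$, which is the asserted algebra. The compatibility of $\sK^{\sm o}$-structures coming from $R^+$ and $\hat\sK^{\sm o}$ is automatic from the construction in Theorem~\ref{T3}(2), since the untilt identifies the Teichm\"uller lift $[(p^\flat)^{1/p^i}]$ with the preselected $p^{1/p^i} \in R^+$.

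The main (mild) obstacle is that Theorem~\ref{T3} was stated under the stronger assumption that the residue field is algebraically closed, whereas Theorem~\ref{T4} only asks that $k$ be perfect. I would handle this in one of two equivalent ways: either observe that the proof of Theorem~\ref{T3} goes through verbatim for perfect $k$ (Dietz's seed theory and Proposition~\ref{P3} require only perfectness, and the tilting/untilting formalism uses only $W(k) \subset \hat\sK^{\sm o}$, which is valid for any perfect $k$); or, alternatively, base change along the faithfully flat extension $R \to \widehat{R \otimes_{W(k)} W(\bar k)}$, apply Theorem~\ref{T3} over the extended ring (whose residue field is $\bar k$), and then view the resulting $C$ as an $R$-algebra (resp.\ $R^+$-algebra, after choosing a compatible embedding $R^+ \hookrightarrow (R \hat\otimes_{W(k)} W(\bar k))^+$). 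Either route settles the residue-field discrepancy without further work.
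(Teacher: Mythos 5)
Your proposal follows the paper's own (very terse) argument: feed the almost perfectoid, almost Cohen--Macaulay algebra $B$ constructed in \cite[\S 4]{A2} into Theorem~\ref{T3} instead of into the algebra-modification machinery, and your verification that $\pi\in p^{\frac{1}{p^\infty}}B$ (from $\pi\in pR\subset pB\subset p^{\frac{1}{p^\infty}}B$) is correct. You also flag a real discrepancy that the paper glosses over: Theorem~\ref{T3} is stated for $R$ with algebraically closed residue field, whereas Theorem~\ref{T4} only assumes $k$ perfect; both of your proposed remedies are sound, with the minor caveat that $R\hat\otimes_{W(k)}W(\bar k)$ need not be a domain, so in the base-change route one must first pass to the quotient by a minimal prime of maximal dimension (exactly as the paper does for $R'$ in \S 4.1.1) before invoking Theorem~\ref{T3} and Proposition~\ref{P1}(1) to get the embedding $R^+\hookrightarrow R''^+$.
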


     \section{Proof of Theorem \ref{T2}.}

\subsection{Reduction of Theorem \ref{T2} to the case of a surjection, and a stronger statement in that case.}   
   
       \subsubsection{}   First, we can reduce to the case when $k'$ is {algebraically closed}. Indeed, it suffices to replace $R'$ by an extension $R''$ defined as follows: 
       
           $i)$ if $R'$ is of mixed characteristic, let $\Lambda$ be a coefficient ring and $(x'_1= p, x'_2, \ldots, x'_d)$ be a system of parameters; then $R'$ is a finite extension of $\Lambda[[x'_2, \ldots, x'_{d'}]]$ by Cohen's theorem, and it follows that $R'\hat\otimes_\Lambda W(k'^+)\cong R'\otimes_{\Lambda[[x'_2, \ldots, x'_{d'}]]}W(k'^+)[[x'_2, \ldots, x'_{d'}]]$ is a complete noetherian local $R'$-algebra of dimension $d'$; we take for $R''$ the quotient of $R'\hat\otimes_\Lambda W(k'^+)$ by some minimal prime with $\dim R'' = d'$; 
           
            $ii)$ if $R'$ is of characteristic $p$, we make a similar construction replacing $\Lambda[[x'_2, \ldots, x'_{d'}]]$ by $k[[x'_1, x'_2, \ldots, x'_{d'}]]$.

\medskip     According to \cite[1.1]{AFH}, one has a ``Cohen factorization"  $R \to R^\cdot \to R''= R^\cdot/\frak p$ where $R\to R^\cdot$ is flat. Since every system of parameters for $R$ extends to a system of parameters for $R^\cdot$ by flatness, any Cohen-Macaulay $R^\cdot$-algebra is also a Cohen-Macaulay $R$-algebra. It thus suffices to treat the case of $R^\cdot \to R''$; in other terms, we may assume from the beginning that {\it $R\to R'$ is surjective} - and in particular $R$ and $R'$ {\it have the same (algebraically closed) residue field $k= k'$}. 

  \smallskip   These reductions are compatible with the prescription of $f^+$ in Theorem \ref{T1}, since we may replace at once $R$ by its normalization and apply Proposition \ref{P1} $(3)$ to the sequences $R \to R' \to R''$ and  $R\to R^\cdot    \to R''$.

    \subsubsection{}   This discussion shows that Theorem \ref{T1} follows from:
   
     \begin{thm}\label{T5} Let $R, C$ be as in Theorem \ref{T4}. 
     
     Let ${f}: R\to R':= R/\frak p$ be the quotient map by a prime ideal. Then there exists $\frak m_{R'}$-adically complete Cohen-Macaulay $R'$-algebra $C'$, which is $\hat \sK^{\sm o}$-perfectoid if $R'  $ is of mixed characteristic (\resp perfect if $R'$ is of characteristic $p$), and a commutative diagram
       \begin{equation}\label{CC3}   \xymatrix @-1pc {   R  \ar[d]   \,  \ar@{->}[r]^{{f}}    & R'  \ar[d]  \\  \;\, R^+  \ar[d]   \,  \ar@{->}[r]^{{f}^+}    & \;\, R'^+  \ar[d]    \\  C  \,  \ar@{->}[r]   & C'     .  }  \end{equation} 
  Moreover, ${f}^+$ can be given in advance.   
        \end{thm}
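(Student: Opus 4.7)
The plan is to reduce the surjective case to a composition of quotients by height-one primes in normal complete local domains, and in each such step to apply Theorem \ref{T3} to an auxiliary almost-perfectoid almost-Cohen-Macaulay $R'$-algebra constructed from the prescribed $C$ and ${f}^+$. For the reduction, one first replaces $R$ by its normalization, which is again a complete local domain and over which Cohen-Macaulay algebras descend. One then chooses a saturated chain of primes $0 = \frak p_0 \subsetneq \frak p_1 \subsetneq \cdots \subsetneq \frak p_s = \frak p$, normalizing each intermediate quotient, and lifts the entire chain to $R^+$ compatibly with ${f}^+$ via Proposition \ref{P1}(3). An induction on $s$ reduces the proof to the case where $R$ is normal, $\frak p$ has height one, and $C$ is prescribed in advance.

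In the height-one case, since $R_{\frak p}$ is a discrete valuation ring, one picks $x_1 \in \frak p$ generating $\frak p R_{\frak p}$ and extends to a system of parameters $\underline x = (x_1, \ldots, x_d)$ of $R$; the images $(\bar x_2, \ldots, \bar x_d)$ form a system of parameters of $R'$. The auxiliary algebra is
\[
B := p^{-1/p^\infty}\bigl(\widehat{(C/\sqrt{\frak p C})[\tfrac1p]^{\sm o}}\,\bigr),
\]
the $p$-adic completion of the ring of power-bounded elements of $(C/\sqrt{\frak p C})[\tfrac1p]$ with its $p^{1/p^\infty}$-torsion stripped as in \ref{path}. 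Then $B$ is a uniform, $p$-torsion-free $\hat\sK^{\sm o}$-algebra, and is an $R'^+$-algebra via $C \to C/\sqrt{\frak p C} \to B$ combined with ${f}^+$.

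One must verify: (a) $B$ is $p^{1/p^\infty}$-almost perfectoid, inherited from $C$ via a radical quotient and uniformization up to $p^{1/p^\infty}$-almost error; (b) $B$ is $p^{1/p^\infty}$-almost Cohen-Macaulay over $R'$ with respect to $(\bar x_2, \ldots, \bar x_d)$, since $(x_2, \ldots, x_d)$ is regular on $C/x_1 C$ by the Cohen-Macaulay property of $C$ and only nilpotent and then $p^{1/p^\infty}$-torsion obstructions are introduced upon passing to $B$; (c) the key nontriviality $p^{1/p^\infty} B \not\subset \frak m_{R'} B$, which is where Proposition \ref{P0}(2) is crucial: in the mixed-characteristic case $p \notin \frak p$, so $p \in p^{1/p^\infty} C \cap R$ does not lie in $\frak p R$, hence $C/(\frak m_R C + \sqrt{\frak p C})$ is not $p^{1/p^\infty}$-almost zero and this descends to $B/\frak m_{R'} B$; in the equicharacteristic-$p$ case $p \in \frak p$ one instead chooses $\pi$ a parameter lying outside the appropriate ideal and argues analogously, the output of Theorem \ref{T3} then being perfect by Proposition \ref{P3}(1). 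With these verified, Theorem \ref{T3}(2) applied to $B$ over $R'$ yields a $\hat\sK^{\sm o}$-perfectoid (resp. perfect) $\frak m_{R'}$-adically complete Cohen-Macaulay $R'^+$-algebra $C'$ together with a morphism $B^\natural \to C'$; the required arrow $C \to C'$ is then the composition $C = C^\natural \to B^\natural \to C'$, where the first map comes from $C \to C/\sqrt{\frak p C} \to B$ and the functoriality of $(-)^\natural$ (which is the identity on the perfectoid input $C$).

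The genuinely delicate step is the verification of (b) and (c): ensuring that the auxiliary $B$ inherits enough Cohen-Macaulay content from $C$, with the essential nontriviality supplied by Proposition \ref{P0}(2) (which in turn rests on Hochster's solid closure theory). Secondary care is needed to uniformly handle the equicharacteristic-$p$ case on the $R'$-side (where the output is perfect rather than perfectoid) alongside the mixed-characteristic case, and to maintain compatibility with the prescribed ${f}^+$ throughout the inductive chain of height-one reductions.
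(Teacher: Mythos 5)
The overall shape of your argument — reduce to height one over a normal $R$, construct an auxiliary almost-perfectoid almost-Cohen-Macaulay $R'$-algebra from $C$, and feed it into Theorem \ref{T3} to obtain $C'$ and the arrow $C\to C'$ via $(-)^\natural$-functoriality — is the same as the paper's. However, the central verifications are asserted rather than proved, and several of those assertions are in fact false as stated.

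The crucial step you skip is the paper's intermediate construction $\bar C := C/(x_2^{\frac1\infty})^-$, where $x_2\in\frak p$ generates $\frak p R_{\frak p}$ and $(x_2^{\frac1\infty})^-$ is the closure of the ideal generated by its $p^{\text{th}}$-power roots. The paper's entire subsection \S 4.2.2 is devoted to showing that $\bar C$ is \emph{honestly} $\hat\sK^{\sm o}$-perfectoid and \emph{honestly} Cohen-Macaulay w.r.t. $(p,x_3,\ldots,x_d)$; this uses that the ideal one quotients by is $p$-power-divisible and that its generator acts isometrically, neither of which is visible from $\sqrt{\frak p C}$. Your $B$ is built directly from $C/\sqrt{\frak p C}$, and the claim that it is ``$p^{\frac{1}{p^\infty}}$-almost perfectoid, inherited via a radical quotient and uniformization up to $p^{\frac{1}{p^\infty}}$-almost error'' does not hold without first establishing the $\bar C$ result: a quotient of a perfectoid ring by a radical ideal is not almost perfectoid in general. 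Relatedly, the almost parameter cannot be $p$. The relation one needs is $\pi\,\frak p C\subset x_2 C$ for some $\pi\in R\smallsetminus\frak p$ (eq.~\eqref{4.2}), and one arranges $p\mid\pi$; from this one deduces $\pi^{\frac{1}{p^\infty}}\sqrt{\frak p C}\subset (x_2^{\frac1\infty})^-$, so $\bar C$ and $C/\sqrt{\frak p C}$ are $\pi^{\frac{1}{p^\infty}}$-almost isomorphic — only $\pi$-almost, not $p$-almost. Replacing $\pi$ by $p$ silently requires $p\,\frak p\subset x_2 R$, which has no reason to hold.

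The claim that $B$ becomes an $R'^+$-algebra ``via $C\to C/\sqrt{\frak p C}\to B$ combined with $f^+$'' is also incorrect. For $R^+\to C\to C/\sqrt{\frak p C}$ to factor through $R'^+ = R^+/\frak p^+$ one would need $\frak p^+ C\subset\sqrt{\frak p C}$, i.e.\ that $\frak p^+$ maps into the nilradical of $C/\frak p C$. But $\sqrt{\frak p R^+}$ is the intersection of \emph{all} primes of $R^+$ over $\frak p$, and $\frak p^+$ is just one of them, so $\frak p^+\not\subset\sqrt{\frak p R^+}$ in general. The paper avoids this: $B'$ is only shown to be an $R'$-algebra (using $\pi\frak p\mapsto 0$), the $R'^+$-structure on $C'$ is supplied by Theorem \ref{T3}(2) and need not be compatible with the prescribed $f^+$, and compatibility with a \emph{given} $f^+$ is arranged afterward in a separate argument (\S 4.2.5) using an Abhyankar decomposition $R^+=\mathrm{colim}\,R_\beta$ with $R_\beta\supset x_2^{\frac{1}{e_\beta}}$ and a directed system of $B'_{\underline\beta}$'s. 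Nothing in your proposal addresses this. Finally, the equicharacteristic-$p$ case of $R'$ is not ``analogous'' in a one-line sense: the paper invokes the Cohen--Gabber structure theorem to replace $R$ by a finite extension where a uniformizer $x_1$ of $R_{\frak p}$ satisfies $R[\frac1p]=R[\frac1{x_1}]$, and $\bar C:=C/x_1^{\frac{1}{p^\infty}}C$ is shown to be perfect; your sketch does not touch these points.
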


 The fact that $C$ is given ``in advance" is crucial, as it permits to compose such diagrams for a chain of quotients $R\to R/\frak p_1 \to \cdots \to R/\frak p_n$, and conversely to decompose $R\to R'$ into successive quotients by primes of height $1$  (taking into account Proposition \ref{P1}).  
   
    We thus may and shall assume that {\it $\frak p$ is of height $1$}. Replacing $R$ by its normalization (and $\frak p$ by a prime over it), we may and shall assume in addition that {\it $R$ is normal}.

   \subsection{Proof of Theorem \ref{T5}: case when $R'$ is of mixed characteristic.}  If $R'$ is of mixed characteristic,    
     $R_{\frak p}$ is a discrete valuation ring of equal characteristic $0$.  We proceed in five steps.

     \subsubsection{Choosing an appropriate system of parameters of $R$.}\label{seq}    $\,$ 
      
      We take $x_1 = p$. 
      
      We choose $x_2  \in \frak p$ such that $x_2$ generates $\frak p R_{\frak p}$ and does not lie in any minimal prime ideal $\frak q$ of $R$ above $p$.  
      
  Such an element exists. Indeed, since $R/\frak p$ is of mixed characteristic, $\frak p$ is not contained in any (height one prime) 
   $\frak q$. By prime avoidance there exists $y\in \frak p$ such that $y$ does not belong to any $\frak q$. If $y$ generates $\frak p R_{\frak p}$, we take $x_2= y$. Otherwise, let $x \in \frak p$ generate $\frak p R_{\frak p}$; by the affine version of prime avoidance\footnote{due to E. Davis, \cf \cite[th. 124]{K} (I am indebted to P. Roberts for this reference). We recall the proof for convenience. Let us assume on the contrary that $x+ yR \subset \cup_1^n \frak q_i$ (with $n$  minimal), and select $m$ such that $x\in \frak q_i, i\leq m \leq n$ but $ x\notin \cup_{m+1}^n \frak q_i$. For every $j>m$, let $z_j\in \frak q_j$ be such that $ z_j \notin \frak q_\ell, \, \ell\neq j$. If $Ry\subset  \cup_1^m  \frak q_i$, then $(x, y)R\subset \frak p_i$ for some $i\leq m$ by prime avoidance: a contradiction. 
 Therefore there exists $y'\in  Ry$ not in $\cup_1^m   \frak q_i$, and then $x_2  = x + y'\Pi_{m+1}^n z_j\notin \cup_1^n \frak q_i$.
 
 Actually, since $C$ is reduced, it is not essential for the sequel to assume that $x_2$ generates $\frak p R_{\frak p}$.},
  there exists $x_2\in x + yR$ which does not belong to any $\frak q$. 
  
   The pair $(p  , x_2)$ is then part of a system of parameters $\underline x = (x_1, x_2, \ldots, x_d)$ of $R$, which we fix.  
 We also fix a compatible sequence of roots $x_2^{\frac{1}{e}}  \subset R^+$, and still denote by the same symbols their images in $C$. We denote by $(x_2^{\frac{1}{\infty}})$ the ideal $\cup  x_2^{\frac{1}{e}}C$ and by $(x_2^{\frac{1}{\infty}})^-$ its closure.
        
    \subsubsection{$\bar C :=  C / (x_2^{\frac{1}{\infty}})^-$ is perfectoid,  and $\underline x' := (p, x_3, \ldots, x_d)$ becomes a regular sequence in $\bar C$.}\label{per}  $\,$
    
    For any $e\in \N\setminus 0$, $R[x_2^{\frac{1}{e}}]$  is a finite extension of the regular ring $  W(k)[[x_2^{\frac{1}{e}}, x_3, \ldots, x_d]]$.  On the other hand, $C$ is  a Cohen-Macaulay $R[x_2^{\frac{1}{e}}]$-algebra, hence faithfully flat over $W(k)[[x_2^{\frac{1}{e}}, x_3, \ldots, x_d]]$, hence also over their directed colimit $   W(k)[[x_2^{\frac{1}{\infty}}, x_3, \ldots, x_d]]$. 
     Thus $C/(x_2^{\frac{1}{\infty}})$ is faithfully flat on $A' := W(k)[[ x_3, \ldots, x_d]],$ and so is its (separated) completion $\bar C$ \cite[1.1.1]{A2}. Therefore $\bar C$ is a Cohen-Macaulay $A'$-algebra.

\smallskip On the other hand, $C$ is $p$-torsionfree, and $x_2^{\frac{1}{e}}$ is a non-zero divisor in $C/p$, so that $x_2^{\frac{1}{e}}$ acts isometrically on $C$ (and $x_2^{\frac{1}{e}} (C/p ) = (x_2^{\frac{1}{e}} C)/p $). This implies that for any $j>0$,  the kernel of $C/ (p^{j }, (x_2^{\frac{1}{ \infty}})) \stackrel{\times p}{\to} C/ (p^{j }, (x_2^{\frac{1}{ \infty}})) $ is $p^{j-1}(C/ (p^{j }, (x_2^{\frac{1}{ \infty}}))$. Passing to the limit on $j$, we see  that  $ \bar C  $ is $p$-torsionfree. In turn, $C$ being $\hat\sK^{\sm o}$-perfectoid, Frobenius  induces isomorphisms 
          \begin{equation}     (x_2^{\frac{1}{pe} } C)/p^\frac{1}{p}  = x_2^{\frac{1}{pe}}  (C/p^\frac{1}{p} ) \stackrel{\sim}{\to} x_2^{\frac{1}{e}} (C/p ) = (x_2^{\frac{1}{e}} C)/p   .\end{equation} Passing to the colimit on the integers $e$ (ordered by divisibility), we conclude  that
 $\bar C $ is $\hat\sK^{\sm o}$-perfectoid. 
    
  \subsubsection{Constructing an almost perfectoid almost Cohen-Macaulay $R'$-algebra $B'$ w.r.t. $\underline x'$.}\label{alm}

Since $R_{\frak p}$ is a discrete valuation ring with uniformizer $x_2$, there exists $\pi\in R\setminus \frak p$ such that 
   \begin{equation}\label{4.2} \pi\frak p  \subset x_2R.\end{equation}  Since $R'= R/\frak p$ is $p$-torsionfree, we may also assume that $p$ divides $\pi$.
 
  We fix a compatible system of roots $\pi^{\frac{1}{p^i}}$ in $R^+$ and view $C$ and $\bar C$ as $\Z[\pi^{\frac{1}{p^\infty}}]$-algebras.
       Since $\bar C$ is perfectoid, hence reduced, \eqref{4.2} implies that 
        \begin{equation}\label{4.3} 
 \pi^{{\frac{1}{p^\infty}}} \frak p C \subset  \sqrt{x_2 C}   \subset (x_2^{\frac{1}{\infty}})^-.\end{equation} 
 In particular $\bar C/ p$ is $\pi^{{\frac{1}{p^\infty}}}$-almost isomorphic to $C/ (pC + \sqrt{\frak p C})$. Since $\pi \in R\setminus \frak p$, Proposition \ref{P0} (2) shows that $C/ (\frak m C + \sqrt{\frak p C})$ is not almost zero, and we conclude that $\bar C$ is a $\pi^{{\frac{1}{p^\infty}}}$-almost Cohen-Macaulay $R/x_2$-algebra w.r.t. $\underline x'$. 

However, $\bar C$ may have $\pi^{{\frac{1}{p^\infty}}}$-torsion, hence not be a $R'$-algebra.   We introduce
   $$B' := \pi^{-{\frac{1}{p^\infty}}} \bar C, $$
  a sub-$\sK^{\sm o}$-algebra of $\bar C[\frac{1}{\pi}]$. It is $p$-torsionfree and $p$-adically complete, and also uniform \cite[2.5]{A1}. In fact, it is $\pi^{\frac{1}{p^\infty}}$-almost perfectoid and  $\pi^{{\frac{1}{p^\infty}}}$-almost Cohen-Macaulay w.r.t.  
  $ \underline x'   $ since $\bar C \to B'$ is a $\pi^{\frac{1}{p^\infty}}$-almost isomorphism, \cf \ref{path}. By \eqref{4.2}, $\frak p$ goes to $\pi$-torsion in $B'$ (hence to $0$), so that $R\to B'$ factors through $R'  $.
  
    \subsubsection{Obtaining $C'$ using Shimomoto's construction.}\label{fin} We may apply Theorem \ref{T3} to $B'$, which is a $\pi^{\frac{1}{p^\infty}}$-almost $\hat \sK^{\sm o}$-perfectoid, almost Cohen-Macaulay $R'$-algebra w.r.t.  $\underline x'  $. This provides a $\hat\sK^{\sm o}$-perfectoid $\frak m_{R'}$-adically complete Cohen-Macaulay $R'$-algebra $C'$ (and we may assume that $C'$ is an $R'^+$-algebra), together with a morphism of perfectoid algebras $ B'^\natural \to  \hat{\tilde B}'^{{\rm n} \natural } \to C'$.   Moreover,  $ \hat{\tilde B}'^{{\rm n} \natural }$ is an $ \hat{\tilde R}'^{\rm n}$-algebra.      On the other hand, since $\bar C$ is perfectoid, the composed $R$-morphism $\bar C\to B' \to \hat{\tilde B}'^{{\rm n} }$ factors through $\hat{\tilde B}'^{{\rm n} \natural }\subset \hat{\tilde B}'^{{\rm n}  }$, and we conclude that the composed morphism $C\to \bar C \to    \hat{\tilde B}'^{{\rm n} \natural } \to C'$ is compatible with $f$.  
  
 \smallskip  This proves Theorem \ref{T5} except for the factorization through ${f}^+$ (the kernel of $R^+ \to C'$ might not even be a prime above $\frak p$). But this is already {\it enough to prove Theorem \ref{T1} in the mixed characteristic case}.

   \subsubsection{Taking care of ${f}^+$.} We now fix a lifting ${f}^+: R^+\to R'^+$ of ${f}$ (which exists by Proposition \ref{P1}), and denote by $\frak p^+$ its kernel, which is a prime of $R^+$ above $\frak p$.  In order to take ${f}^+$ into account, we will have to modify $C'$ in the spirit of part $(2)$ of Theorem \ref{T3}.
 By the (usual) Abhyankar lemma, we may write $R^+$ as the directed colimit of normal finite sub-$R$-algebras $R_\beta$ such that for some $e_\beta\in \N$,  $R_\beta$ contains $x_2^{\frac{1}{e_\beta}}$ and its ramification index is $e_\beta$ at $\frak p_\beta := \frak p^+\cap R_\beta$ (the latter condition amounts to: $\, R_\beta\subset K'[[x_2^{\frac{1}{e_\beta}}]]$). We note that $R_\beta$ is also a complete local domain with residue fied $k$.

We set $R'_\beta := R_\beta/\frak p_\beta$. Then $R'^+= R^+/\frak p^+ = \cup R'_\beta$, and  $ R^+\stackrel{f^+}{\surj} R'^+$ induces maps $ R_\beta\stackrel{f_\beta}{\surj} R'_\beta$ and coincides with their directed colimit.
  We can perform the previous steps with $(R_\beta, \frak p_\beta)$ instead of  $(R, \frak p)$. In step  \ref{seq}, we can replace $\underline x$ by the system of parameters $\underline x_\beta := (p, x^{\frac{1}{e_\beta}}, x_3, \ldots, x_d)$.  In step  \ref{per}, the same $\bar C$ works for every $\beta$. In step  \ref{alm}, we can choose an element $\pi_\beta \in  R_\beta \setminus \frak p_\beta $ divisible by $p $, such that $\pi_\beta  \frak p_\beta \subset x^{\frac{1}{e_\beta}}R_\beta$. In this way, the pairs $\underline \beta  = (\beta,   \pi_\beta  )$ form a directed set (with the order $(\beta,   \pi_\beta) \leq (\beta',   \pi_{\beta'})$ if $\beta\leq \beta'$ and $ \pi_\beta$ divides $ \pi_{\beta'} $). 

 We define the $R'_\beta$-algebra $B'_{\underline\beta} := {\pi_\beta}^{-\frac{1}{p^\infty}}\bar C$, and the $\tilde R'_\beta$-algebra $\tilde B'_{\underline\beta} $ as in part $(1)$ of the proof of Theorem \ref{T3}. 
 The tilts $\widehat{\tilde B'_{\underline\beta}}^\flat $ form a directed system of $\widehat{R'_{ \beta}}^\flat$-algebras, which are ${(\pi_\beta^\flat)}^{\frac{1}{p^\infty}}$-almost Cohen-Macaulay $k[[p^\flat, x_3^\flat, \ldots, x_d^\flat]]$-algebras. Their ($p^\flat$-adically completed) colimit w.r.t. $\underline\beta$ maps to a $\hat\sK^{\flat \sm o}$-perfect(oid) Cohen-Macaulay $k[[p^\flat, x_3^\flat, \ldots, x_d^\flat]]$-algebra by Proposition \ref{P3} $(2)$.
   Untilting and completing $\frak m_{R'}$-adically, we get the perfectoid Cohen-Macaulay $R'$-algebra $C'$ (Proposition \ref{P2}), which is also an algebra over 
     $\widehat{R'^+} = \widehat{\rm{colim}} \, \widehat{R'_{ \beta}}$, and get a composed morphism $C \to  \bar C \to \widehat{\rm{colim}} \, \hat{\tilde B}_{\underline\beta}^\natural \to C'$ which is compatible with 
 ${f}^+ = {\rm{colim}}\,f_\beta$.   \qed
  
     \subsection{Proof of Theorem \ref{T5}: case when $R'$ is of characteristic $p$.}      The proof is similar, somewhat simpler.  We proceed in four steps. 
  
   \subsubsection{Choosing  an appropriate system of parameters of $R$.} By the Cohen-Gabber structure theorem \cite[IV 4.2]{ILO} (based on Epp's elimination of wild ramification), there exists a finite extension $S$ of $R$ in $R^+$ which becomes a finite extension of a ring $A := V[[x_2, \ldots, x_d]]$ (where $V$ is a complete discrete valuation ring with residue field $k = k^+$), such that $A\to S$ becomes etale after inverting some element of $A$ not contained in any minimal prime above $p$. Let $x_1$ be a uniformizer of $V$. One has $V[\frac{1}{p}]=  V[\frac{1}{x_1}]$, and if $e$ denotes the absolute ramification index of $V$, $p^{-\frac{1}{e}}x_1$ is a unit in the discrete valuation ring $V[p^{\frac{1}{e}}]$.
   
 We may and shall replace $R$ by $S$ (and $\frak p$ by a prime above it).  Then  $\underline x := (x_1, \ldots, x_d)$ is a system of parameters of $R$, $x_1$ is a uniformizer of the discrete valuation ring of mixed characteristic $R_{\frak p}$,  $R[\frac{1}{p}]=  R[\frac{1}{x_1}]$, $A\cap \frak p = x_1A$, and the image $\underline x'$ of $(x_2, \ldots, x_d)$ in $R'$ is a system of parameters. 
 We choose a compatible sequence of roots $x_1^{\frac{1}{p^i}}$ in $C$.  
   
   \subsubsection{$\bar C :=  C/x_1^{\frac{1}{p^\infty}}C\,$ is a perfect Cohen-Macaulay $R/x_1$-algebra w.r.t. $\underline x'$.} Since $C$ is actually perfectoid over the perfectoid ring $\hat\sK^{\sm o}[p^{\frac{1}{e}}]= \widehat{W[p^{\frac{1}{p^{e\infty}}}]}$, $F$ induces an isomorphism  
      $C/p^{\frac{1}{ep^{i+1}}} \stackrel{\sim}{\to} C/p^{\frac{1}{ep^{i}}}$, hence an isomorphism 
      $C/x_1^{\frac{1}{p^{i+1}}} \stackrel{\sim}{\to} C/x_1^{\frac{1}{p^{i}}}$ since $p^{-\frac{1}{e}}x_1$ is a unit in $C$.   Passing to the colimit on $i$, we see that $\bar C$ is perfect. 
      
    As in \ref{per}, for any $i$, $C $ is faithfully flat over    $V[x_1^{\frac{1}{p^i}}][[x_2, \ldots, x_d]]$, hence also over their directed colimit. Thus $\bar C$ is faithfully flat over $A':=  k[[ x_2, \ldots, x_d]]$.  
 
    \subsubsection{Constructing an almost Cohen-Macaulay $R'$-algebra $B'$ w.r.t. $\underline x'$.} Since $R_{\frak p}$ is a discrete valuation ring with uniformizer $x_1$, there exists $\pi\in R\setminus \frak p$ such that 
   \begin{equation}\label{4.4} \pi\frak p  \subset x_1R.\end{equation}  
  We fix a compatible system of roots $\pi^{\frac{1}{p^i}}$ in $R^+$ and view $C$ and $\bar C$ as $\Z[\pi^{\frac{1}{p^\infty}}]$-algebras.
        Since $\bar C$ is perfect, hence reduced, \eqref{4.4} implies that 
        \begin{equation}\label{4.5} 
 \pi^{{\frac{1}{p^\infty}}} \frak p C\subset  \sqrt{x_1 C}   \subset x_1^{\frac{1}{p^\infty}}C.\end{equation} 
     By the same argument as above, using Proposition \ref{P0}, $\bar C$ is $ \pi^{{\frac{1}{p^\infty}}}$-almost Cohen-Macaulay w.r.t. $\underline x'$. It may have $\pi^{{\frac{1}{p^\infty}}}$-torsion, and as above, we introduce  $ B' := \pi^{-{\frac{1}{p^\infty}}} \bar C $, which is still $\pi^{{\frac{1}{p^\infty}}}$-almost Cohen-Macaulay w.r.t. $\underline x'$.  
     
     \subsubsection{Obtaining $C'$ using Dietz's construction.} By Proposition \ref{P3} $(1)$,
       $B'$ maps to a perfect, $\frak m_{R'}$-adically separated, Cohen-Macaulay
 $R'$-algebra without zero-divisor. Denoting by $C'$ its $\frak m_{R'}$-adic completion, we have a homomorphism $C\to C'$ compatible with $f$. The kernel of $R^+ \to C'$ is a prime $\frak p^+$ lying above $\frak p$, so that $C\to C'$ factors through some lifting $ R^+ \to R'^+$ of $f$.  
 
 Moreover, since  $R$ is normal (after preliminary reduction), any lift $f^+$ of $f$  equals the previous one precomposed with some $\sigma \in G_K$ (Proposition \ref{P1} $(2)$), and it then suffices to precompose accordingly $R^+ \to C$ with $\sigma$. \qed

   \subsection{} By iterated application of Cohen factorizations, any finite sequence  $R_0\stackrel{{f}_1}{\to} R_1\stackrel{{f}_2}{\to}  \cdots \stackrel{{f}_{n }}{\to}  R_n  $  of local homomorphisms fits into a commutative diagram 
          \begin{equation}\label{CC4}   \xymatrix @-1pc { R_0   \ar[d]   \,  \ar@{->}[r]^{{f_1} }   & R_1\ar[d]   \,  \ar@{->}[r]^{{f_2} }    & \cdots   \,  \ar@{->}[r]^{{f_{n }} }    & R_n   \ar[d]    
          \\ R^\cdot_0  \,  \ar@{->}[r]^{{f^\cdot_{1 }} }     & R^\cdot_1  \,  \ar@{->}[r]^{{f^\cdot_{2 }} }     & \cdots   \,  \ar@{->}[r]^{{f^\cdot_{n }} }     & R^\cdot_n     
      }   \end{equation} 
 where the vertical maps are flat (the last one being $\rm{id}_{R_n}$), and the $f^\cdot_i$ are surjective. 
     Combining this with Theorem \ref{T5}, one obtains the following generalization of \ref{T2}:  
    
 \begin{thm}\label{T6} Any finite sequence  $R_0\stackrel{{f}_1}{\to} R_1\stackrel{{f}_2}{\to}  \cdots \stackrel{{f}_{n }}{\to}  R_n  $  of local homomorphisms of complete noetherian  local domains, with $R_0$ of mixed characteristic $(0,p)$,  fits into a commutative square  
       \begin{equation}\label{CC5}   \xymatrix @-1pc { R_0   \ar[d]   \,  \ar@{->}[r]^{{f_1} }   & R_1\ar[d]   \,  \ar@{->}[r]^{{f_2} }    & \cdots   \,  \ar@{->}[r]^{{f_{n }} }    & R_n   \ar[d]  
        \\  \;\, R_0^+  \ar[d]   \,  \ar@{->}[r]^{{f_1}^+}   & \;\, R_1^+\ar[d]   \,  \ar@{->}[r]^{{f_2}^+}    & \cdots   \,  \ar@{->}[r]^{{f_{n }}^+}    & \;\, R_n^+ \ar[d]    
     \\   C_0  \,  \ar@{->}[r]    & C_1  \,  \ar@{->}[r]    & \cdots   \,  \ar@{->}[r]    & C_n  
         }   \end{equation}  
           where each $C_i$ is a
$\hat\sK^{\sm o}$-perfectoid Cohen-Macaulay $R_i$-algebra if $R_i$ is of mixed characteristic $(0,p)$,  or a perfect Cohen-Macaulay $R_i$-algebra if $R_i$ is of characteristic $p$.

Moreover,  the ${f}_i^+$ can be given in advance. 
   \end{thm}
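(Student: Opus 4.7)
\medskip

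\noindent \textit{Proof strategy.} The plan is to reduce to the case of surjective horizontal maps by iterated Cohen factorizations, and then to build the algebras $C_i$ inductively along the bottom row of the resulting extended diagram, invoking Theorem \ref{T5} at each step with the source Cohen-Macaulay algebra prescribed in advance.

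First I would construct an extended commutative diagram of the shape \eqref{CC4} by iterated Cohen factorization \cite[1.1]{AFH}. Set $R_n^\cdot := R_n$ and proceed from right to left: given a flat map $R_{i+1} \to R_{i+1}^\cdot$ already produced, apply Cohen factorization to the composite $R_i \to R_{i+1} \to R_{i+1}^\cdot$ to obtain $R_i \to R_i^\cdot \to R_{i+1}^\cdot$ with first map flat and second map surjective. After quotienting each $R_i^\cdot$ by a suitable minimal prime (as in the preliminary reductions before Theorem \ref{T5}), every $R_i^\cdot$ is a complete noetherian local domain, and $R_0^\cdot$ remains of mixed characteristic $(0,p)$.

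Next I would build $C_i^\cdot$ as a $\hat\sK^{\sm o}$-perfectoid or perfect Cohen-Macaulay $R_i^\cdot$-algebra by induction on $i$. Start with $C_0^\cdot$ produced by Theorem \ref{T4} applied to $R_0^\cdot$. Given $C_{i-1}^\cdot$, apply Theorem \ref{T5} (or, once the chain has transitioned into characteristic $p$, the corresponding construction based directly on Proposition \ref{P3} in the spirit of the characteristic $p$ half of the proof of Theorem \ref{T5}) to the surjection $f_i^\cdot: R_{i-1}^\cdot \to R_i^\cdot$, feeding $C := C_{i-1}^\cdot$ as the algebra prescribed in advance; this yields $C_i^\cdot$ together with a compatible morphism $C_{i-1}^\cdot \to C_i^\cdot$. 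Since the vertical map $R_i \to R_i^\cdot$ is flat, any system of parameters of $R_i$ extends to one of $R_i^\cdot$, so setting $C_i := C_i^\cdot$ makes each $C_i$ automatically Cohen-Macaulay over $R_i$, with the perfectoid or perfect type dictated by the characteristic of $R_i$.

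The remaining issue is compatibility with the prescribed lifts $f_i^+: R_{i-1}^+ \to R_i^+$. For each vertical arrow $R_i \to R_i^\cdot$, fix a lift $R_i^+ \to R_i^{\cdot +}$ (which exists by Proposition \ref{P1}(1)). The two composites $R_{i-1}^+ \to R_i^+ \to R_i^{\cdot +}$ and $R_{i-1}^+ \to R_{i-1}^{\cdot +} \to R_i^{\cdot +}$ both lift the same ground-level ring map, so after normalizing the $R_i^\cdot$ if necessary, Proposition \ref{P1}(2)--(3) allows us to adjust the chosen lifts of the $f_i^\cdot$ to their absolute integral closures so that every square commutes; these adjusted lifts $f_i^{\cdot +}$ are then the ones prescribed in advance at each inductive application of Theorem \ref{T5}. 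I expect the main technical obstacle to be precisely this bookkeeping of absolute-closure lifts through the auxiliary Cohen-factorization rings $R_i^\cdot$, although it is ultimately fully controlled by Proposition \ref{P1}.
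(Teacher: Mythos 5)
Your proposal is correct and follows essentially the same route as the paper: iterated Cohen factorizations to produce the auxiliary diagram \eqref{CC4} with flat vertical maps and surjective bottom maps, followed by an induction along the bottom row that repeatedly invokes Theorem~\ref{T5} with $C:=C_{i-1}^\cdot$ prescribed in advance, and with the bookkeeping of the lifts $f_i^+$ controlled by Proposition~\ref{P1}. The only minor point worth noting is that quotienting $R_i^\cdot$ by a minimal prime (to keep it a domain) could in principle destroy the flatness of $R_i\to R_i^\cdot$ that you use to transfer the Cohen--Macaulay property down to $R_i$; the safer phrasing --- matching the paper's preliminary reduction --- is to first extend the residue field of the \emph{target} $R_n$ to an algebraic closure and pass to a minimal prime there, after which the iterated Cohen factorizations can be arranged to keep each $R_i^\cdot$ a domain without further quotienting.
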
  
 
    Here $\hat\sK^{\sm o}$ is the perfectoid ring obtained from the Witt ring of the algebraic closure of the residue field of $R_n$ by adjoining $p^{th}$-power roots of $p$ and completing.

    \medskip
    \section*{Erratum to \cite{A1}} Almost algebra hides many subtle difficulties. We are grateful to O. Gabber for pointing the following erratum to \cite[\S 1]{A1}. 

Every almost projective $\frak A$-module $\frak P$ is flat \cite[2.4.12]{GR}. But it is not clear in general that it is almost {\it faithfully} flat if it is {\it faithful}; it is not even clear that every almost finite etale extension is almost {\it faithfully} flat (as written in \cite[rem. 1.7.2 (1), 1.8.1. (1)]{A1})\footnote{this is an important issue since the perfectoid Abhyankar lemma asserts that a certain extension is almost finite etale, whereas it is the almost {faithful} flatness of this extension which is used in its applications.}. 
 The results \cite[2.4.28 iv, v]{GR} only show that the evaluation ideal $\mathcal E_{\frak P/\frak A}\subset \frak A$ satisfies $\mathcal E_{\frak P/\frak A}\frak P = \frak P$ if $\frak P$ is {faithful}, whereas {\it faithful} flatness amounts to $\mathcal E_{\frak P/\frak A}  = \frak A\,$ (the difficulty is that it is not clear that the idempotent ideal $\mathcal E_{\frak P/\frak A}$ is generated by an idempotent almost-element). 

However, the problem disappears if $\frak P$ is of constant rank $r$, because $\mathcal E_{\frak P/\frak A}  \supset \mathcal E_{\wedge^r\frak P/\frak A}= \frak A$ (\cf \cite[proof of 4.3.8]{GR}), and more generally if $\frak P$ is of finite rank.   

Fortunately, all almost finite etale extensions occurring in the paper are of finite rank (as almost projective modules) because they occur in the context of \cite[Prop. 1.9.1 (1),(3)]{A1}, in connection with Galois extensions and subextensions. However, in the given proof that Galois extensions are of rank $r = \mid G\mid$, the argument by faithful flatness descent should be replaced by the following observation (Gabber): for any extension $\frak A\inj \frak B$ and any (almost) finite projective $\frak A$-module $\frak P$ such that $\frak P_{\frak B}$ is of rank $r$, $\frak P$ is of rank $r$. Indeed, $ \wedge^{r+1} \frak P $ injects into $ {\wedge^{r+1} \frak P_{\frak B} }= 0$, hence $\frak P$ is of finite rank; furthermore, by \cite[4.3.27]{GR}, $\frak A$ decomposes into a finite product $\prod \frak A_i$ where $\frak P_{\frak A_i}$ has rank $i$, and by tensoring with $\frak B$, one sees that $\frak P_{\frak A_i}= 0$ for $i\neq r$.

         \end{sloppypar}    
  
 \bigskip

 \end{document}